\newcommand{\md}[1]{\textit{#1}}
\begin{document}

\title{Decomposition of Symplectic Vector Bundles and Azumaya Algebras}

\author[Arcila-Maya]{Niny Arcila-Maya\\ August 23, 2025}

\date{\today}

\address{Department of Mathematics, San Francisco State University, San Francisco~CA, 94132 USA}

\email{niny.arcilamaya@sfsu.edu}

\subjclass[2020]{55P99, 55Q52, 55S35, (Primary) 55S45, 16H05, 16W10 (Secondary)}
\keywords{topological Azumaya algebra, orthogonal involution, symplectic involution, special orthogonal group, symplectic group, symplectic vector bundles, topological obstruction theory}
\begin{abstract}
Let $X$ be a connected CW complex. Let $\cV$ be a symplectic vector bundle of rank $2mn$ over $X$, and let $\cA$ be a topological Azumaya algebra of degree $2mn$ with a symplectic involution over $X$. We give conditions for the positive integers $m$ and $n$, and the dimension of $X$ so that $\cV$ can be decomposed as the tensor product of a symplectic vector bundle of rank $2m$ and an orthogonal vector bundle of rank $n$; and so that $\cA$ can be decomposed as the tensor product of topological Azumaya algebras of degrees $2m$ and $n$ with involutions of the first kind. To show this we compute the homomorphisms induced on homotopy groups by the tensor product $\tensor:\fSp{m} \times \fO{n} \to \fSp{mn}$.
\end{abstract}

\maketitle

\section{Introduction}

Work of Azumaya \cite{Azu1951}, followed by Auslander and Goldman \cite{AG1960}, generalized the notion of a central simple algebra over a field to Azumaya algebras defined over commutative rings. Grothendieck further extended this concept to schemes \cite{GroI1966}. In particular, Grothendieck’s definition specializes to the topological setting by taking the structure sheaf 
 $\cO$ to be the sheaf of continuous complex-valued functions. More explicitly, given a connected topological space $X$, a topological Azumaya algebra $\cA$ of degree $n$ over $X$ is a bundle of complex algebras whose fibers are isomorphic to the complex matrix algebra $\M(n,\CC)$. 

Let $k$ be a field and let $A$ be a central simple algebra over $k$. An \textit{involution} on $A$ is an additive map $\tau:A\to A$ satisfying $\tau\circ\tau = \id_{A}$ and $\tau(ab)=\tau(b)\tau(a)$ for all $a, b \in A$. Such an involution $\tau$ is said to be of the \textit{first kind} if $\tau|_{k}=\id_{k}$. Involutions of the first kind are classified as orthogonal or symplectic; see \cite{KMRT1998}.  The notion of involutions of the first kind has been generalized by Saltman, Knus, Parimala, and Srinivas to Azumaya algebras over schemes, along with the corresponding orthogonal and symplectic classifications; see \cite{SAzuInvo1978}*{Section 3}, \cite{KPS1990}, and \cite{K91book}*{III.8}. These also specialize to the topological setting. 

\begin{definition}
Let $X$ be a connected topological space, and let $\cA$ be a topological Azumaya algebra of degree $n$ over $X$. 
\begin{enumerate}
\item An \textit{involution on $\cA$}  is a morphism of fiber bundles $\tau:\cA \rightarrow \cA$ such that $\tau\circ\tau=\id_{\cA}$, and when restricted to fibers it is an involution of complex algebras. In this case, $(\cA,\tau)$ is called a \textit{topological Azumaya algebra with an involution $\tau$}.
\item The involution $\tau$ is said to be \md{symplectic} (\md{orthogonal}) if the restriction $\tau|_{\cA^{-1}(x)}:\cA^{-1}(x)\to\cA^{-1}(x)$ is  a symplectic (an orthogonal) involution of complex algebras for all $x\in X$, in the sense of the Subsection 2.1 in \cite{TAAwOI2022}.
\end{enumerate}
\end{definition}

Central simple algebras have numerous well-studied structural properties, and significant research has focused on determining the extent to which these properties generalize to Azumaya algebras defined over commutative rings and schemes. Methods from algebraic topology, particularly involving topological Azumaya algebras, have proven instrumental in shedding light on these algebraic questions; see \cites{AWtwisted2014, AW2x32014, AWpd2014, AWpdB2014, AFW15,  TAA2021, TAAwOI2022}. 

Classically, it is known that central simple algebras of degree $mn$, with $m$ and $n$ relatively prime, decompose as tensor products of central simple algebras of degrees $m$ and $n$. However, such decomposition does not generally extend to central simple algebras with involutions of the first kind \cites{ART1979, KPS1991, Merku, TAAwOI2022}. Consequently, analogous decompositions are not typically expected for Azumaya algebras with involutions defined over commutative rings, schemes, or topological spaces.
 
Antieau and Williams showed that, in general, Azumaya algebras without involution do not admit a prime decomposition. Their argument relies on proving the absence of such decompositions for topological Azumaya algebras without involution \cite{AW2x32014}*{Corollary 1.3}. On the other hand, the author provided explicit conditions under which topological Azumaya algebras without involution admit tensor product decompositions that need not be unique, and proved that such decompositions do not exist in general for arbitrary CW complexes; see \cite{TAA2021}*{Theorem 1.3, Remark 3.7}. Furthermore, the author also proved that topological Azumaya algebras of degree $mn$ with an orthogonal involution admit tensor product decompositions, albeit non-necessarily unique, when $m$ and $n$ are relatively prime integers and the base space is a CW complex of dimension at most $\min\{m, n\}$; see \cite{TAAwOI2022}*{Theorem 1.2, Theorem 1.5}. However, the existence of such positive decomposition results does not imply that tensor product decomposition occur for Azumaya algebras with involution of the first kind over commutative rings.

In this paper, we investigate the decomposition properties of topological Azumaya algebras equipped with symplectic involutions, extending earlier work from \cite{TAAwOI2022} on orthogonal involutions. In Theorem \ref{mainSp}, we provide conditions on positive integers $m, n$ and a topological space $X$ such that a topological Azumaya algebra of degree $2mn$ over $X$ with a symplectic involution decomposes as the tensor product of topological Azumaya algebras with involutions of the first kind. 
\begin{theorem}\label{mainSp}
Let $X$ be a CW complex such that $\dim(X)\leq 7$. Let $m$ and $n$ be relatively prime positive integers such that $m>1$, $n>7$, and $n$ is odd. If $\cA$ is a topological Azumaya algebra of degree $2mn$ over $X$ with a symplectic involution, then there exist topological Azumaya algebras $\cA_{2m}$ and $\cA_{n}$ of degrees $2m$ and $n$, respectively, such that $\cA_{2m}$ has a symplectic involution, $\cA_{n}$ has an orthogonal involution and is Brauer-trivial, and $\cA\iso \cA_{2m}\tensor\cA_{n}$. 
\end{theorem}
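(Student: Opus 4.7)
The plan is to recast the decomposition problem as a lifting question for classifying maps and then resolve it by obstruction theory, with the essential input being a computation of the homomorphism on homotopy groups induced by the tensor product. A topological Azumaya algebra of degree $2mn$ with a symplectic involution over $X$ is classified by a homotopy class $\varphi\colon X\to B(\fSp{mn}/\{\pm I\})$, and a decomposition $\cA\iso \cA_{2m}\tensor\cA_{n}$ of the claimed type corresponds exactly to a lift of $\varphi$ through the map
$$B(\fSp{m}/\{\pm I\})\times B\fO{n}\longrightarrow B(\fSp{mn}/\{\pm I\})$$
induced by $\tensor\colon \fSp{m}\times \fO{n}\to \fSp{mn}$. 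That the right-hand factor is $B\fO{n}$ rather than its projective quotient is precisely what encodes the Brauer-triviality of $\cA_{n}$ in the target of the lifting problem.

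Next, obstruction theory places the successive obstructions to such a lift in $H^{i+1}(X;\pi_i(F))$, where $F$ is the homotopy fibre of the displayed map; since $\dim X\leq 7$, only the range $i\leq 6$ contributes. Through the long exact sequence of the fibration, controlling $\pi_i(F)$ reduces to controlling the induced homomorphism
$$\tensor_*\colon \pi_i(\fSp{m})\oplus \pi_i(\fO{n})\longrightarrow \pi_i(\fSp{mn})$$
in degrees $\leq 7$. The hypotheses $m>1$ and $n>7$ place all three groups in the stable range of Bott periodicity throughout this window, so the source and target homotopy groups are explicitly tabulated. The numerical engine of the argument is that $\tensor_*$ acts, up to sign and torsion corrections, as multiplication by $n$ on the symplectic summand and by $2m$ on the orthogonal summand; coprimality $(m,n)=1$ together with $n$ odd yields $(2m,n)=1$, so B\'ezout relations combine these multiplications to surject onto each $\pi_i(\fSp{mn})$ and to identify $\pi_i(F)$ precisely.

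The main obstacle will be the $2$-primary bookkeeping in the low degrees where the symplectic and orthogonal homotopy groups carry substantial $2$-torsion. The tensor-product map interacts with this torsion through Stiefel--Whitney and symplectic characteristic classes, and the oddness of $n$ is the exact hypothesis that isolates and eliminates the resulting obstructions; this mirrors, but is not identical to, the analogous argument in \cite{TAAwOI2022} for the purely orthogonal case, since here the target is a symplectic projective group and one has to track a $\{\pm I\}$ on both sides. Once $\tensor_*$ has been pinned down in each relevant degree, each obstruction class lies in a cohomology group that vanishes for dimensional reasons under $\dim X\leq 7$, so the lift of $\varphi$ can be constructed inductively over the skeleta of $X$, producing the decomposition $\cA\iso \cA_{2m}\tensor\cA_{n}$ claimed in the theorem.
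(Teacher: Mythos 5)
Your setup agrees with the paper's: the theorem is recast as the lifting problem \eqref{cd:liftingproblem} along $f_{\tensor}:\B\PSp(m)\times\B\SO(n)\to\B\PSp(mn)$, and the essential computational input is the formula $\tensor_{*}(x,y)=nx+2my$ in the stable range (Proposition \ref{P:tpSpO}). The gap is in your final step. It is not true that the obstruction groups $\Hy^{i+1}(X;\pi_{i}F)$ "vanish for dimensional reasons" when $\dim(X)\leq 7$, because the homotopy fibre $F$ of $f_{\tensor}$ is not highly connected. Concretely, the long exact sequence of the fibration gives $\pi_{2}F\iso\ker\bigl(\ZZ/2\times\ZZ/2\to\ZZ/2\bigr)\neq 0$ and, in degree $4$,
\[
\pi_{4}F\iso\ker\Bigl(\pi_{3}\Sp(m)\times\pi_{3}\SO(n)\xrightarrow{\;(x,y)\mapsto nx+2my\;}\pi_{3}\Sp(mn)\Bigr)\iso\ZZ ,
\]
so there are potential obstructions in $\Hy^{3}(X;\ZZ/2)$ and $\Hy^{5}(X;\ZZ)$, neither of which is forced to vanish on a $7$-dimensional complex. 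Showing that these particular classes vanish would require identifying the relevant k-invariants of the Moore--Postnikov tower of $f_{\tensor}$ over $\B\PSp(mn)$, whose cohomology is far from sparse (unlike that of $\B\Sp(mn)$, which is why the paper can run exactly this kind of argument in Section \ref{sec:symplecticvb} for honest symplectic bundles but not here). As stated, your argument does not close.

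The paper avoids this by a different device. It constructs (Proposition \ref{P:TtildaSp}) a complementary homomorphism $\widetilde{\Tr}:\PSp(m)\times\SO(n)\to\SO(N)$ out of the tensor square $\stensor^{2}:\fSp{m}\to\fO{4m^{2}}$ followed by $u$-fold and $v$-fold direct sums, where $u,v$ are B\'ezout coefficients with $\bigl|vn-4um^{2}\bigr|=1$ (available because $n$ is odd and coprime to $m$). The product map $J=\bigl(f_{\tensor},\B\widetilde{\Tr}\bigr):\B\PSp(m)\times\B\SO(n)\to\B\PSp(mn)\times\B\SO(N)$ then induces on the free homotopy groups the matrix $\left(\begin{smallmatrix}n & 2m\\ 2um & v\end{smallmatrix}\right)$ of determinant $\pm1$, and an inspection of all residues mod $8$ shows $J$ is a $7$-equivalence (Corollary \ref{C:J}). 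By the Whitehead theorem $J_{\#}$ is surjective on $[X,-]$ for $\dim(X)\leq 7$, so a preimage of $s\circ\cA$ (with $s$ a section of $\proj_{1}$) provides the lift, proving Theorem \ref{T:BSp2mn} and hence the theorem. This "complete $f_{\tensor}$ to an equivalence" step is the idea missing from your proposal; that something beyond degree counting is genuinely needed is illustrated by Proposition \ref{nosecsymp} and Examples \ref{TAAwSIdontdecomp1}--\ref{TAAwSIdontdecomp2}, where the same obstruction groups are nonzero and the decomposition really fails.
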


According to \cite{Steen1951}*{8.2}, there is a bijective correspondence between:
\begin{align*}\label{bcTAAwithInvo}
\begin{Bmatrix} 
\text{Isomorphism classes of degree-$n$}\\
\text{topological Azumaya algebras}\\
\text{over $X$ with an involution}\\
\text{locally isomorphic to $\tau$}
\end{Bmatrix}
\leftrightarrow
\begin{Bmatrix} 
\text{Isomorphism classes of principal}\\
\text{$\Aut\bigl(\fM{n},\tau\bigr)$-bundles}\\
\text{over $X$}
\end{Bmatrix},
\end{align*}
where $\tau$ is an involution of the first kind on $\fM{n}$. In this setting, topological Azumaya algebras with involution over $X$ are classified by $[X,\B\fPSp{n}]$ in the symplectic case, and by $[X,\B\fPO{n}]$ in the orthogonal case.

Therefore, we interpret the decomposition problem of topological Azumaya algebras with symplectic involution as a lifting problem involving classifying spaces; see diagram \eqref{cd:liftingproblem}. More precisely, to prove Theorem \ref{mainSp}, we establish in Theorem \ref{T:BSp2mn} that a map $X\rightarrow \B\fPSp{mn}$ lifts to the product $\B\fPSp{m}\times \B\fSO{n}$. This lift is realized via the map $f_{\tensor}:\B\fPSp{m} \times \B\fSO{n} \to \B\fPSp{mn}$ which is induced by the tensor product operation $\tensor:\fPSp{m} \times \fSO{n} \to \fPSp{mn}$. The existence of the lifting $\cA_{2m}\times\cA_{n}$ holds under the assumptions that $m>1$ and $n>7$ is odd, and $X$ is a CW complex of dimension $\dim(X) \leq 7$.

\begin{equation}\label{cd:liftingproblem}
\begin{tikzcd}[execute at begin picture={\useasboundingbox (-4.5,-1) rectangle (4.5,1);},row sep=large,column sep=huge]
& \B\PSp(m)\times \B\SO(n) \arrow[d,"f_{\tensor}"] \\
 X \arrow[r,"\cA"] \arrow[ur,dotted,"\cA_{2m}\times\cA_{n}",bend left=20]  & \B\PSp(mn).
\end{tikzcd}
\end{equation}

The proof of Theorem \ref{T:BSp2mn} relies critically on understanding the induced homomorphisms on homotopy groups arising from certain operations on symplectic and orthogonal groups. These operations include the $r$-fold direct sum of symplectic matrices $\splus^{r}:\fSp{m}\to\fSp{rm}$, and the tensor product operations $\tensor:\fSp{m} \times \fO{n} \to \fSp{mn}$ and $\stensor:\fSp{m} \times \fSp{n} \to \fO{4mn}$, which will be defined in detail in Sections \ref{sec:Sp}. We study these induced homomorphisms within the homotopy-theoretic \textit{stable range for $\fSp{n}$} defined as the range $\{0,1,\dots,4n+1\}$. In particular, we establish the following precise calculation:
\begin{proposition}
Let $i<4m+2$ and $i<n-1$, and let $\tensor:\fSp{m} \times \fO{n} \to \fSp{mn}$ denote the tensor product. This operation induces a homomorphism on homotopy groups $\tensor_{*}:\pi_{i}\fSp{m}\times\pi_{i}\fO{n} \to \pi_{i}\fSp{mn}$ given by 
\[
\otimes_{*}(x,y)=nx+2my \quad \text{for} \quad x \in \pi_{i}\fSp{m} \quad \text{and} \quad y\in \pi_{i}\fO{n}.
\]
\end{proposition}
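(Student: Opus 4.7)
The strategy is to exploit that $\tensor$ is a continuous group homomorphism from the direct product, and to decompose it accordingly. The Kronecker mixed-product identity $(A_1A_2)\tensor(B_1B_2) = (A_1\tensor B_1)(A_2\tensor B_2)$ guarantees the homomorphism property, and the factorization $A\tensor B = (A\tensor I_n)\cdot(I_{2m}\tensor B)$ expresses $\tensor$ as the pointwise product of two sub-homomorphisms $\tensor_1(A) = A\tensor I_n$ and $\tensor_2(B) = I_{2m}\tensor B$. Since the Eckmann--Hilton principle forces the pointwise product of based maps into the topological group $\fSp{mn}$ to represent the sum of their $\pi_i$-classes, we obtain
\[
\tensor_*(x,y) = (\tensor_1)_*(x) + (\tensor_2)_*(y) \quad \text{in } \pi_i\fSp{mn},
\]
reducing the problem to two independent computations.

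To handle the first summand, after reordering the natural basis of the tensor product, $A\tensor I_n$ is conjugate to the block-diagonal matrix $\operatorname{diag}(A,\ldots,A)$ with $n$ copies of $A$, so $\tensor_1$ coincides with the $n$-fold symplectic direct sum $\splus^n\colon\fSp{m}\to\fSp{mn}$. The analysis of $\splus^r$ carried out in Section~\ref{sec:Sp} then yields that $(\splus^n)_*$ is multiplication by $n$ on $\pi_i\fSp{m}$ in the range $i < 4m+2$, giving $(\tensor_1)_*(x) = nx$.

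For the second summand, $I_{2m}\tensor B$ is block-diagonal with $2m$ copies of $B$. Using $I_{2m} = I_m\tensor I_2$, the map factors as
\[
\fO{n} \xrightarrow{\,\iota\,} \fSp{n} \xrightarrow{\,\splus^m\,} \fSp{mn},
\]
where $\iota(B) = I_2\tensor B = \operatorname{diag}(B,B)$ is the standard hyperbolic embedding of the orthogonal group into the symplectic group. The direct sum $\splus^m$ contributes a factor of $m$ by Section~\ref{sec:Sp}. The remaining factor of $2$ comes from $\iota_*$: the hyperbolic doubling $B\mapsto\operatorname{diag}(B,B)$ is identified as twice the canonical stable map $\pi_i\fO{n}\to\pi_i\fSp{n}$ in the range $i<n-1$. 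Hence $(\tensor_2)_*(y) = 2my$, and adding the two contributions yields $\tensor_*(x,y) = nx + 2my$.

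The main obstacle is establishing that $\iota_*$ supplies the factor of $2$. The plan is to factor $\iota$ through the standard chain $\fO{n}\hookrightarrow\fU{n}\hookrightarrow\fSp{n}$ and track the effect on stable homotopy groups via complexification and quaternionification, equivalently via the natural maps $KO\to KU\to KSp$; alternatively, one may view the doubling $\operatorname{diag}(B,B)$ as the composition of the $2$-fold direct sum $B\mapsto B\oplus B$ in $\fO{2n}$ with the identification $\fO{2n}\cap\fSp{n}=\fU{n}$, and use that the $2$-fold direct sum is multiplication by $2$ on $\pi_i\fO$ in the stable range.
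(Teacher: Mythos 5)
Your argument is correct and follows essentially the same route as the paper: the mixed-product factorization plus the Eckmann--Hilton principle reduces the computation to $L(A)=A\tensor I_{n}$ and $R(B)=I_{2m}\tensor B$ separately, $L$ is conjugate (hence homotopic through homomorphisms) to the $n$-fold symplectic direct sum, $R$ factors through the doubling map $B\mapsto\diag(B,B)$ followed by an $m$-fold stabilized sum, and the factor of $2$ is obtained exactly as in the paper by factoring $\fO{n}\to\fSp{n}$ through $\fU{n}$ and reading off the effect on stable homotopy from the maps $KO\to KU\to KSp$ (multiplication by $2$ when $i\equiv 3,7\ (\mathrm{mod}\ 8)$, and trivial source or target otherwise). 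The only caveat concerns your secondary ``alternatively'' sketch: the intersection is $\fGL{n}\simeq\fU{n}$ rather than $\fU{n}$ on the nose, and knowing that the composite $\pi_{i}\fO{n}\to\pi_{i}\fSp{n}\to\pi_{i}\fO{2n}$ is multiplication by $2$ does not by itself determine the first map, since $\pi_{i}\fSp{n}\to\pi_{i}\fO{2n}$ need not be injective; your primary route is the one that closes the argument.
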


When restricting attention to topological Azumaya algebras arising from symplectic vector bundles--those of the form $\End(\cV)$, with $\cV$ a symplectic vector bundle--we obtain a stronger decomposition result applicable in a broader dimensional range. This enhanced result is achieved by analyzing the Moore-Postnikov tower associated with the map $f_{\tensor}$.
\begin{theorem}\label{vbSp}
Let $m$ and $n$ be positive integers such that $n$ is odd. Let $X$ be a CW complex such that $\dim(X)\leq n$. If $\cV$ is a symplectic bundle of rank $2mn$ over $X$, then there exist a symplectic bundle $\cV_{2m}$ of rank $2m$, and an orthogonal bundle $\cV_{n}$ of rank $n$ such that $\cV\iso \cV_{2m}\tensor\cV_{n}$.
\end{theorem}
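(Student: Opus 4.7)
The plan is to recast the theorem as a lifting problem and resolve it through the Moore-Postnikov factorization of $f_{\tensor}\colon\B\fSp{m}\times\B\fO{n}\to\B\fSp{mn}$. Since rank-$2mn$ symplectic vector bundles over $X$ are classified by $[X,\B\fSp{mn}]$, and a decomposition $\cV\iso\cV_{2m}\tensor\cV_{n}$ is precisely the data of a homotopy lift of the classifying map $\cV\colon X\to\B\fSp{mn}$ through $f_{\tensor}$, the theorem amounts to producing such a lift whenever $\dim(X)\leq n$.

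First, I would construct the Moore-Postnikov tower of $f_{\tensor}$, in which each stage $E_i\to E_{i-1}$ is a principal $K(\pi_i F,i)$-fibration with $F$ the homotopy fiber. Under the assumption $\dim(X)\leq n$ the obstruction $H^{i+1}(X;\pi_i F)$ to extending a partial lift past stage $i$ is automatically trivial for $i\geq n$, so only stages with $i\leq n-1$ can contribute nontrivial obstructions.

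Next, I would compute $\pi_i F$ in that range using the long exact sequence of the fibration $F\to\B\fSp{m}\times\B\fO{n}\xrightarrow{f_{\tensor}}\B\fSp{mn}$ combined with the identification $(f_\tensor)_*(x,y)=nx+2my$ supplied by the proposition. The oddness of $n$ is essential here: multiplication by $n$ is invertible on any $2$-primary torsion summand of $\pi_*\fSp{mn}$, while the $\fO{n}$-factor supplies the $2my$-contribution that governs the torsion-free pieces, so $\pi_i F$ is accessible in terms of elementary arithmetic involving $n$ and $m$.

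The main obstacle is to verify, stage by stage, that the universal $k$-invariant $k_i\in H^{i+1}(E_{i-1};\pi_i F)$ pulls back to zero along the current partial lift $X\to E_{i-1}$. For degrees $i+1$ not divisible by $4$, the cohomology of $\B\fSp{mn}$ vanishes because $H^*(\B\fSp{mn})$ is polynomial on symplectic Pontryagin classes in degrees $4j$; I would propagate this vanishing inductively up the tower using its principal fibration structure and the computed $\pi_i F$. For the stages whose degree is divisible by $4$, I would exploit the freedom in choosing the orthogonal factor, in particular the mod-$2$ choices carried by $\pi_0\fO{n}$ and $\pi_1\fO{n}$, to adjust the partial lift so as to kill the surviving obstructions on the specific map $\cV$. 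This added flexibility, absent in the Azumaya algebra setting of Theorem \ref{mainSp} where $\B\fPSp{mn}$ replaces $\B\fSp{mn}$, is precisely what permits the broader dimensional range $\dim(X)\leq n$ in place of $\dim(X)\leq 7$.
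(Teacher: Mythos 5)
Your overall framing---recasting the theorem as a lifting problem along the Moore--Postnikov tower of $f'_{\tensor}\colon \B\fSp{m}\times\B\fSO{n}\to\B\fSp{mn}$, noting that obstructions in degrees above $\dim(X)$ vanish automatically, and computing the homotopy of the fiber from $\tensor_{*}(x,y)=nx+2my$---agrees with the paper. The gap is in how you kill the obstructions in degrees $\leq n$. At each stage the obstruction is a class in $H^{i+1}(X;\pi_{i}F)$ for an \emph{arbitrary} CW complex $X$ with $\dim(X)\leq n$, and these groups need not vanish. Vanishing of $H^{i+1}(\B\fSp{mn})$ in degrees not divisible by $4$ does not ``propagate up the tower'': each stage is built by a principal fibration whose fiber is an Eilenberg--MacLane space, which contributes cohomology in essentially all degrees above its bottom one, and in any case what must vanish is the pullback of the $k$-invariant to $X$, not the cohomology of the intermediate stage. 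Your case division is also off: since $\pi_{j}F\iso\pi_{j}\B\fSO{n}$ is nonzero only for $j\equiv 0,1,2,4\pmod 8$, every obstruction lives in a degree $\equiv 1,2,3,5\pmod 8$, so there are no stages of degree divisible by $4$; the proposed adjustment via $\pi_{0}\fO{n}$ and $\pi_{1}\fO{n}$ addresses a case that does not occur, while the genuine cases are left without an argument.

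The idea you are missing is to run the obstruction theory not on $X$ but on the $n$-skeleton of the target. The paper lifts the inclusion $\sk_{n}\B\fSp{mn}\hookrightarrow\B\fSp{mn}$ up the entire tower: $\B\fSp{mn}$ is a quaternionic Grassmannian, which admits a CW structure with cells only in dimensions divisible by $4$, and since $n$ is odd, $\Hy^{*}(\sk_{n}\B\fSp{mn};A)$ is concentrated in degrees $\equiv 0\pmod 4$. Hence every obstruction group $\Hy^{j+1}\bigl(\sk_{n}\B\fSp{mn};\pi_{j}F\bigr)$, with $j+1\equiv 1,2,3,5\pmod 8$, vanishes, and a universal lift $\xi$ of the inclusion exists. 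A given $\cV\colon X\to\B\fSp{mn}$ with $\dim(X)\leq n$ factors through $\sk_{n}\B\fSp{mn}$ by cellular approximation, and composing that factorization with $\xi$ produces the desired lift. It is this skeleton argument---not any extra freedom in the orthogonal factor---that accounts for the improved range $\dim(X)\leq n$ relative to Theorem \ref{mainSp}.
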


This paper is organized as follows. In Section \ref{sec:Sp} we introduce the direct sum and tensor product operations for compact Lie groups related to the complex symplectic and complex orthogonal groups. We review the homotopy groups of the complex symplectic and complex projective symplectic groups. Moreover, we establish the connectivity of the stabilization maps. Section \ref{sec:stabilization} presents results describing the homomorphisms induced on homotopy groups by the operations defined in Preliminaries. Section \ref{sec:mainproof} is devoted to the proofs of Theorem \ref{mainSp} and Proposition \ref{nosecsymp}. We prove in Proposition \ref{nosecsymp} that $f_{\tensor}$ does not have a section necessarily and we provide examples of topological Azumaya algebras with symplectic involutions that do not decompose as the tensor product of topological Azumaya algebras with involutions of the first kind when defined over spaces whose dimension is outside the stable range for symplectic groups. Finally, Section \ref{sec:symplecticvb} we prove the decomposition of complex symplectic vector bundles in the stable range for the complex special orthogonal group.

\subsection{Acknowledgments}
The author extends gratitude to \textit{Ben Williams} for suggesting the research topic,  investing substantial time in detailed research discussions, and for thoroughly reviewing the manuscript. Heartfelt appreciation is also extended to \textit{Kirsten Wickelgren} for valuable suggestions that significantly improved the manuscript’s exposition. The author is grateful to the anonymous referee for their careful reading and insightful comments, many of which have been incorporated into this revised version.

\subsection{Notation and Conventions} \label{notation}
All topological spaces considered are assumed to have the homotopy type of a CW complex. We fix basepoints for connected topological spaces, and for topological groups we take the identity element as the basepoint. We write $\pi_{i}X$ in place of $\pi_{i}(X,x_{0})$. 

Let $\FF$ denote either the field of real numbers $\RR$ or complex numbers $\CC$. We denote by $\M(n,\FF)$ the group of $n\times n$ matrices with entries in $\FF$, and by $\GL(n,\FF)\coloneqq \M(n,\FF)^{\times}$ the group of invertible matrices. 
We write $\SL(n,\FF)$, $\Or(n,\FF)$, and $\SO(n,\FF)$ for the special linear group, the orthogonal group, and the special orthogonal group of degree $n$ over $\FF$, respectively. The complex unitary group of degree $n$ is denoted by $\U(n,\CC)$, the compact symplectic group of degree $n$ by $\U(n,\HH)$, and the complex symplectic group of degree $2n$ by $\fSp{n}$. 
The complex projective special group of degree $n$ is denoted by $\fPSL{n}\coloneqq \fSL{n}/Z(\fSL{n})$, and the complex projective sympletic group of degree $2n$ by $\fPSp{n}\coloneqq \fSp{n}/Z(\fSp{n})$, where $Z(-)$ denotes the center of a group.

Given matrices $A_{1}, \dots, A_{r} \in \M(n,\FF)$, we write $\diag(A_{1},\dots,A_{r}) \in \M(rn,\FF)$ to denote the block-diagonal matrix with blocks $A_{1}, \dots, A_{r}$ along the diagonal.

\section{Preliminaries}
\label{sec:Sp}

In this section, we introduce operations and notation that will be central to the solution of the lifting problem. We begin by introducing a set of operations and maps on the symplectic and orthogonal groups. We then review the homotopy groups of $\fSp{n}$, and compute the corresponding groups for $\fPSp{n}$ in low degrees. Finally, we establish the connectivity of the stabilization map and its componentwise version, which justifies a simplified notation that will streamline the discussion of stabilized operations in Sections \ref{sec:stabilization} and \ref{sec:mainproof}.

\subsection{Operations on \texorpdfstring{$\Sp(n,\CC)$}{Sp(n,CC)}} \label{subsub:operations}
We introduce the direct sum of symplectic matrices, the stabilization maps of symplectic groups, the doubling map, and two tensor product operations. These will be used to construct the map $\widetilde{\Tr}:\fPSp{m}\times \fSO{n} \rightarrow \fSO{N}$ in Proposition \ref{P:TtildaSp} and to understand the tensoring map $f_{\tensor}:\B\fPSp{m} \times \B\fSO{n} \to \B\fPSp{mn}$ at the level of homotopy groups. These maps serve as key building blocks for the lifting $\cA_{2m}\times \cA_{n}$ in diagram \eqref{cd:liftingproblem}. 

Let $A\in \fSp{n}$. Then we may write $A$ in block form as
\[
\begin{pmatrix}
A_{11} & A_{12}\\
A_{21} & A_{22}
\end{pmatrix}
\]
where each $A_{ij} \in \M(n,\CC)$ for $i,j=1,2$, and the following conditions are satisfied: $A_{11}^{\tr}A_{21}$ and $A_{12}^{\tr}A_{22}$ are symmetric, and $A_{11}^{\tr}A_{22}-A_{21}^{\tr}A_{12}=I_{n}$.

\begin{definition}
Let $m, n, r \in \NN$. We define the following maps and operations:
\begin{enumerate}
\item The \textit{standard inclusion of the symplectic group} is the map $i:\fSp{n} \hookrightarrow \fSp{n+1}$ defined by
\begin{equation*}
i(A)=
\begin{pmatrix}
 &  &  & 0 &  &  &  & 0\\
 & A_{11} &  & \vdots &  & A_{12} &  & \vdots \\
 &  &  & 0 &  &  &  & 0\\
0 & \cdots & 0 & 1 & 0 & \cdots & 0 & 0 \\
 &  &  & 0 &  &  &  & 0\\
 & A_{21} &  & \vdots &  & A_{22} &  & \vdots \\
 &  &  & 0 &  &  &  & 0\\
0 & \cdots & 0  & 0 &  0 & \cdots & 0  & 1
\end{pmatrix}.
\end{equation*}
\item The \textit{direct sum of symplectic matrices} is the operation $\splus:\fSp{m}\times \fSp{n} \to \fSp{m+n}$ defined by 
\[
A\splus B=
\left(\begin{NiceArray}{cc:cc}
A_{11} &  & A_{12} & \\
 & B_{11} &  & B_{12}\\[1mm]
\hdottedline
A_{21} &  & A_{22} & \\
 & B_{21} &  & B_{22}
\end{NiceArray}\right).
\]

\item The \textit{$r$-fold direct sum of symplectic matrices} is the operation $\splus^{r}:\fSp{n}\to \fSp{rn}$ defined by 
\[
A^{\splus r}=
\left(\begin{NiceArray}{cc}
A_{11}^{\oplus r} & A_{12}^{\oplus r} \\[2mm]
A_{21}^{\oplus r} & A_{22}^{\oplus r}
\end{NiceArray}\right).
\]

\item The \textit{stabilization map} $\s:\fSp{m} \rightarrow \fSp{m+n}$ is defined by
\begin{equation*}
\s(A)=
\left(\begin{NiceArray}{cc:cc}
A_{11} &  & A_{12} & \\
 & I_{n} &  & I_{n} \\[1mm]
\hdottedline
A_{21} &  & A_{22} & \\
 & I_{n}  &  & I_{n} 
\end{NiceArray}\right).
\end{equation*}

\item For each $j=1,\dots,r$, the \textit{$j$-th stabilization map} $\s_{j}:\fSp{n} \rightarrow \fSp{rn}$ is define by 
\[
\s_{j}(A)=
\begin{pmatrix}
\sigma_{j}(A_{11}) & \sigma_{j}(A_{12})\\[1.5mm]
\sigma_{j}(A_{21}) & \sigma_{j}(A_{22})
\end{pmatrix},
\]
where 
\[
\sigma_{j}(X)=\diag(I_{n},\dots,I_{n},X,I_{n},\dots,I_{n}) \in \M(rn,\CC),
\]
with $X \in \M(rn,\CC)$ appearing in the $j$-th diagonal block and the remaining blocks being $n\times n$ identity matrices. 
\end{enumerate}
\end{definition}

\begin{definition}
Consider the two-fold direct sum on orthogonal groups:
\[
\oplus^{2}(A)=%
\begin{pmatrix}
A & 0\\
0 & A
\end{pmatrix}.
\]
Since the image of $\oplus^{2}$ is contained in $\fSp{n}$, we define the \textit{doubling map} as the homomorphism $d:\fO{n}\to \fSp{n}$, $d(A)=\oplus^{2}(A)$.
\end{definition}

Let us denote by $B_{\mathrm{sym}}$ the standard symmetric bilinear form and by $B_{\mathrm{skew}}$ the standard skew-symmetric bilinear form. Recall that with this notation we have the isomorphisms
\[
\Aut(\CC^{n}, B_{\mathrm{sym}}) \iso \fO{n} \quad \text{ and } \quad \Aut(\CC^{2n}, B_{\mathrm{skew}}) \iso \fSp{n}.
\]

\begin{definition}
Let $m, n \in \NN$. We defined two distinct tensor product operations. 
\begin{enumerate}
\item The \textit{tensor product of symplectic and orthogonal matrices} is the operation 
\begin{equation*}
\begin{tikzcd}
\tensor:\fSp{m} \times \fO{n}\arrow[r] & \fSp{mn}
\end{tikzcd}
\end{equation*}
induced by the tensor product of bilinear forms $\tensor: \Aut(\CC^{2m}, B_{\mathrm{skew}}) \times \Aut(\CC^{n}, B_{\mathrm{sym}})\rightarrow \Aut(\CC^{2m}\tensor \CC^{n}, B_{\mathrm{skew}}\tensor B_{\mathrm{sym}})$.

\item The tensor product of symplectic matrices arises from the tensor product of bilinear forms $\tensor: \Aut(\CC^{2m}, B_{\mathrm{skew}}) \times (\CC^{2m}, B_{\mathrm{skew}})\rightarrow G$ where $G\leq \fGL{4mn}$ denotes $\Aut(\CC^{4mn}, B_{\mathrm{skew}} \tensor B_{\mathrm{skew}})$. Note that $B_{\mathrm{skew}}\tensor B_{\mathrm{skew}}$ is not the standard symmetric bilinear form on $\CC^{4mn}$. Let $P \in \fGL{4mn}$  be the matrix representing a change to an orthonormal basis. We then have a commutative diagram, where $\Inn_{P}$ denotes conjugation by $P$:
\begin{equation*}
\begin{tikzcd}[row sep=large]
G \arrow[r,hookrightarrow] \arrow[d,"\Inn_{P}"] & \GL(4mn,\CC) \arrow[d,"\Inn_{P}"]\\
\Or(4mn,\CC) \arrow[r,hookrightarrow] \arrow[u,leftarrow,"\iso"] & \GL(4mn,\CC). \arrow[u,leftarrow,"\iso"]
\end{tikzcd}
\end{equation*}

The \textit{tensor product of symplectic matrices} is thus defined as the composite:
\begin{equation*}
\begin{tikzcd}
\stensor:\fSp{m} \times \fSp{n}\arrow[r] &  \fO{4mn}, \quad \stensor\coloneq \Inn_{P} \circ \tensor.
\end{tikzcd}
\end{equation*}
\end{enumerate}
\end{definition}

\subsection{Homotopy groups of \texorpdfstring{$\fSp{n}$}{Sp(n)} and \texorpdfstring{$\fPSp{n}$}{PSp(n)}} \label{subsub:homotype}
We recall the homotopy groups of $\fSp{n}$ in the stable range, and its first unstable homotopy group. We compute the homotopy groups of $\fPSp{n}$ in low degrees.

Let $n\geq1$. The standard inclusion defined in the previous subsection $i:\fSp{n}\hookrightarrow \fSp{n+1}$ is $(4n+2)$-connected. Thus, it induces an isomorphism on homotopy groups in degrees less than $4n+2$ and an epimorphism in degree $4n+2$. This connectivity follows from examining the long exact homotopy sequence associated with the fibration $\fSp{n} \hookrightarrow \fSp{n+1} \rightarrow \fSp{n+1}/\fSp{n}\simeq S^{4n+3}$. According to \cite{MiToToG1991}*{Chapter IV}, the homotopy groups of $\fSp{n}$ in the stable range are computed using Bott periodicity. They are given by:
\begin{align*}
\pi_{i}\fSp{n}&\iso
\begin{cases}
0 & \text{if $i=0,1,2,6$ (mod 8),}\\
\ZZ/2 & \text{if $i=4,5$ (mod 8),}\\
\ZZ & \text{if $i=3,7$ (mod 8),}
\end{cases}
\end{align*}
 for all $i< 4n$, and by
\begin{align*}
\pi_{4n}\fSp{n}\iso\pi_{4n+1}\fSp{n}&\iso
\begin{cases}
\ZZ/2 & \text{if $n$ is odd},\\
0 & \text{if $n$ is even},
\end{cases}
\end{align*}
at the boundary of the stable range. The first unstable homotopy group of $\fSp{n}$ occurs in degree $4n+2$ and, from  \cite{MiToHoG1963}, it is explicitly given by:
\begin{align*}
\pi_{4n+2}\fSp{n}\iso
\begin{cases}
\ZZ/(2n+1)! & \text{if $n$ is even,}\\
\ZZ/(2n+1)!\cdot2 & \text{if $n$ is odd}.
\end{cases}
\end{align*}

The projective symplectic group $\fPSp{n}$ relates closely to $\fSp{n}$. 
The quotient map $\fSp{n} \rightarrow \fPSp{n}$ is a universal cover with fiber $\ZZ/2$, yielding immediately that $\pi_{1}\fPSp{n}\iso \ZZ/2$. Moreover, using the associated fibration $\{\pm I_{2n}\}\hookrightarrow \fSp{n} \rightarrow \fPSp{n}$ we deduce from the long exact sequence of homotopy groups that 
\[
\pi_{i}\fPSp{n}\iso\pi_{i}\fSp{n} \quad \text{for all } i\ge 2.
\]

\begin{table}[!ht]
\centering
{\small \renewcommand{\arraystretch}{1.2}
\begin{tabular}{|c|cccccccc|}
\hline
$i>1$ and $i$ (mod 8) & 0 & 1 & 2 & 3 & 4 & 5 & 6 & 7\\
\hline
$\pi_{i}G$ & 0 & 0 & 0 & $\ZZ$ & $\ZZ/2$ & $\ZZ/2$ & 0 & $\ZZ$ \\
\hline
\end{tabular}}
\caption{Homotopy groups of $\fSp{n}$ and $\fPSp{n}$ for $i=2,\dots,4n-1$}
\label{tab:lowhgSp}
\end{table}

\subsection{Stabilization notation} \label{subsub:stab}
We prove that the stabilization map and the componentwise stabilization maps are connected in the stable range for $\fSp{n}$. This result allows us to introduce a convenient notation for stabilizing the operations and homomorphisms defined in Subsection \ref{subsub:operations}.

Let $m, n \in \NN$. We begin by noting that the stabilization map $\s:\fSp{m} \rightarrow \fSp{m+n}$ can be expressed as the composite of consecutive canonical inclusions. Consequently, the map $\s$ is $(4m+2)$-connected. Let $\esta$ denote the homomorphism induced by $\s$ on homotopy groups. From this connectivity, we identify $\pi_{i}\fSp{m}$ with $\pi_{i}\fSp{m+n}$ for all $i<4m+2$ via the isomorphism induced by $\s$:
\begin{equation}
\begin{tikzcd}
\esta:\pi_{i}\fSp{m} \arrow[r,"\iso"] & \pi_{i}\fSp{m+n}.
\end{tikzcd}
\end{equation}

\begin{lemma}\label{L:sjSp}
Let $n,r \in \NN$. The maps $\s_{1},\dots, \s_{r}:\fSp{n} \rightarrow \fSp{rn}$ are pointed homotopic.
\end{lemma}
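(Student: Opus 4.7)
The plan is to show each $\s_j$ is conjugate to $\s_1$ by a symplectic permutation matrix, and then use the path-connectedness of $\fSp{rn}$ to promote this conjugation to a pointed homotopy, yielding $\s_1\simeq\s_j$ for every $j$.

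First I would construct, for each $j\in\{1,\dots,r\}$, an explicit element $P_j\in\fSp{rn}$ realizing the required conjugation. View $\CC^{rn}$ as $(\CC^n)^{\oplus r}$, let $\pi_j$ be the transposition $(1\ j)$, and let $Q_j\in\M(rn,\CC)$ be the corresponding block-permutation matrix exchanging the $1$st and $j$th summands. Set
\[
P_j=\begin{pmatrix} Q_j & 0 \\ 0 & Q_j \end{pmatrix}\in\M(2rn,\CC).
\]
Because $Q_j$ is orthogonal ($Q_j^{\tr}Q_j=I_{rn}$), a direct computation with the standard symplectic form $J=\begin{psmallmatrix}0 & I\\ -I & 0\end{psmallmatrix}$ gives $P_j^{\tr}J P_j=J$, so $P_j\in\fSp{rn}$.

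Next I would verify the block identity $\s_j(A)=P_j\,\s_1(A)\,P_j^{-1}$ for all $A\in\fSp{n}$. This reduces to the equality $Q_j\,\sigma_1(X)\,Q_j^{-1}=\sigma_j(X)$ for $X\in\M(n,\CC)$, which is immediate: conjugating $\diag(X,I_n,\dots,I_n)$ by the block-permutation $Q_j$ that swaps the first and $j$th blocks moves $X$ into the $j$th diagonal slot, leaving the remaining diagonal blocks equal to $I_n$ and the off-diagonal blocks zero.

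Finally, since $\fSp{rn}$ is a connected complex Lie group, there exists a continuous path $\gamma_j\colon [0,1]\to\fSp{rn}$ with $\gamma_j(0)=I_{2rn}$ and $\gamma_j(1)=P_j$. The map
\[
H_j\colon [0,1]\times \fSp{n}\to \fSp{rn},\qquad H_j(t,A)=\gamma_j(t)\,\s_1(A)\,\gamma_j(t)^{-1},
\]
is continuous, interpolates between $\s_1$ and $\s_j$, and sends $(t,I_{2n})$ to $I_{2rn}$ for every $t$. Hence $\s_1\simeq\s_j$ as pointed maps, and transitivity gives the conclusion. The only content in the argument is the block-permutation identification and the connectedness of $\fSp{rn}$; neither step is likely to present any serious obstacle.
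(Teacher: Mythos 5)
Your argument is correct and follows essentially the same route as the paper: conjugate $\s_j$ to $\s_1$ by a block-permutation matrix lying in $\fSp{rn}$ (the paper chains adjacent transpositions $\s_j\simeq\s_{j+1}$, while you swap blocks $1$ and $j$ directly), then use path-connectedness of $\fSp{rn}$ to turn the conjugation into a pointed homotopy. The paper outsources the last step to a cited lemma where you write the homotopy $H_j(t,A)=\gamma_j(t)\,\s_1(A)\,\gamma_j(t)^{-1}$ explicitly, but the content is identical.
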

\begin{proof}
We show that the maps $\s_{j}$ and $\s_{j+1}$ are pointed homotopic for $j=1,\dots,r-1$. 

For each such $j$, define the permutation matrix $P_{j} \in \M(rn,\CC)$ by
\[
P_{j}=
\begin{pmatrix}
I_{(j-1)n} & & &  \\
 & 0 & I_{n} & \\
 & I_{n} & 0 & \\
 & & & I_{(r-j-1)n}
\end{pmatrix}.
\]
This matrix acts by transposing the $j$-th and $(j+1)$-st $n\times n$ blocks along the diagonal. Observe that $P_{j}$ satisfies the relation
\[
\sigma_{j+1}(X)=P_{j}\sigma_{j}(X)P_{j} \quad \text{for all } X \in \M(n,\CC),  
\]
which can be verified by direct matrix multiplication. For instance, when $j=2$, $P_{2}$ and $\sigma_{2}(X)$ take the form: 
\begin{align*}
P_{2}=
\begin{pmatrix}
I_{n} & & &  \\
 & 0 & I_{n} & \\
 & I_{n} & 0 & \\
 & & & I_{(r-3)n}
\end{pmatrix} 
\quad \text{and} \quad
\sigma_{2}(X)=
\begin{pmatrix}
I_{n} & & &  \\
 & X & 0 & \\
 & 0 & I_{n} & \\
 & & & I_{(r-3)n}
\end{pmatrix},
\end{align*}
and one checks that 
\begin{align*}
P_{2}\sigma_{2}(X)P_{2}
=
\begin{pmatrix}
I_{n} & & &  \\
 & I_{n} & 0 & \\
 & 0 & X & \\
 & & & I_{(r-3)n}
\end{pmatrix}
= \sigma_{3}(X).
\end{align*}

As a consequence, for all $A \in \fSp{n}$  we have
\[
\s_{j+1}(A)=\diag(P_{j},P_{j})\s_{j}(A)\diag(P_{j},P_{j}).
\]
To seee this, write 
\[
A=
\begin{pmatrix}
A_{11} & A_{12}\\
A_{21} & A_{22}
\end{pmatrix},
\]
and compute:
\begin{align*}
\diag(P_{j},P_{j})\s_{j}(A)\diag(P_{j},P_{j}) &= 
\begin{pmatrix}
P_{j} & 0\\
0 & P_{j}
\end{pmatrix}
\begin{pmatrix}
\sigma_{j}(A_{11}) & \sigma_{j}(A_{12})\\[1.5mm]
\sigma_{j}(A_{21}) & \sigma_{j}(A_{22})
\end{pmatrix}
\begin{pmatrix}
P_{j} & 0\\
0 & P_{j}
\end{pmatrix}\\
&=%
\begin{pmatrix}
\sigma_{j+1}(A_{11}) & \sigma_{j+1}(A_{12})\\[1.5mm]
\sigma_{j+1}(A_{21}) & \sigma_{j+1}(A_{22})
\end{pmatrix}
= \s_{j+1}(A).
\end{align*}

Thus, $s_{j+1}=c\circ s_{j}$, where $c:\fSp{rn} \rightarrow \fSp{rn}$ is conjugation by $\diag(P_{j},P_{j})$; that is, $c(X)=\diag(P_{j},P_{j})X\diag(P_{j},P_{j})^{-1}$ for all $X\in \fSp{rn}$. Since $\diag(P_{j},P_{j}) \in \fSp{rn}$ and $\fSp{rn}$ is path-connected, it follows from \cite{TAAwOI2022}*{Lemma 3.2} that $c$ is pointed homotopic to the identity map on $\fSp{rn}$. Therefore, $\s_{j}$ and $\s_{j+1}$ are pointed homotopic for $j=1,\dots,r-1$.
\end{proof}

Since the first stabilization map $\s_{1}$ is equal to $\s:\fSp{n} \to \fSp{n+(r-1)n}$, it follows that each stabilization map $\s_{j}$ is $(4n+2)$-connected for all $j=1,\dots,r$. By Lemma \ref{L:sjSp}, the induced homomorphisms on homotopy groups by the componentwise stabilization maps are equal. Hence, we use $\esta$ to denote the common map
\[
\esta = \pi_{i}(\s_{1}) = \cdots = \pi_{i}(\s_{r}). 
\]
Thus, we identify $\pi_{i}\fSp{n}$ with $\pi_{i}\fSp{rn}$ for $i<4n+2$ via $\esta$. This identification justifies a slight abuse of notation: we write $x = \esta(x)$ for $x\in \pi_{i}\fSp{n}$ and $i<4n+2$.

\section{Stabilization of operations on homotopy groups of \texorpdfstring{$\Sp(n,\CC)$}{Sp(n,CC)} and \texorpdfstring{$\PSp(n,\CC)$}{PSp(n,CC)}} \label{sec:stabilization}
In this section, we describe the homomorphisms induced on homotopy groups by the operations and maps introduced in Section \ref{sec:Sp}. These descriptions play a crucial role in establishing the connectivity of the map $J:\B\PSp(m)\times\B\SO(n) \to \B\PSp(mn)\times\B\SO(N)$ as detailed in Proposition \ref{P:JOSpi}. Of particular significance are the homomorphisms induced by the tensor product operations. Specifically, we show that for $n$ an odd integer, the tensor product operation involving symplectic and orthogonal matrices descends naturally to a quotient $\tensor:\fPSp{m}\times \fSO{n} \to \fPSp{m}$, and we explicitly describe the induced homomorphism within the stable range for $\fSp{m}$. Throughout, the subscript star notation (e.g. $\splus_{*}, \tensor_{*}, \stensor_{*}$, etc) will denote the induced maps on homotopy groups.

We do not write proofs for the following statements as they are analogous to the proofs in \cite{TAA2021}.
\begin{proposition}\label{P:dsSp}
\cite{TAA2021}*{Proposition 2.5}
Let $i\in \NN$. The homomorphism $\splus_{*}: \pi_{i}\fSp{m}\times \pi_{i}\fSp{n} \rightarrow \pi_{i}\fSp{m+n}$ is given by
\[
\splus_{*}(x,y)=\esta(x)+\esta(y) \quad \text{for} \quad x\in \pi_{i}\fSp{m} \quad \text{and} \quad y\in\pi_{i}\fSp{n}.
\]
\end{proposition}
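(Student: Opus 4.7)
The plan is to reduce $\splus_{*}$ to the stabilization map $\esta$ by exploiting the topological group structure on $\fSp{m+n}$. First I would establish the matrix identity
\[
A \splus B = (A \splus I)\cdot(I \splus B)
\]
for all $A \in \fSp{m}$ and $B \in \fSp{n}$, where each $I$ denotes the identity in the appropriate symplectic factor and the product on the right is ordinary matrix multiplication in $\fSp{m+n}$. This is a direct block computation: the two factors affect disjoint block positions, so their matrix product reproduces the block-diagonal combination $A \splus B$.

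Next I would identify the partial maps $A \mapsto A \splus I$ and $B \mapsto I \splus B$ with the stabilization map $\s$, possibly after a permutation of coordinates. Any such permutation is realized by conjugation by a matrix lying in $\fSp{m+n}$, and since $\fSp{m+n}$ is path-connected, this conjugation is pointed homotopic to the identity by \cite{TAAwOI2022}*{Lemma 3.2}. Consequently both partial maps induce $\esta$ on $\pi_{i}$. Given representatives $\alpha\colon S^{i} \to \fSp{m}$ and $\beta\colon S^{i} \to \fSp{n}$ of $x$ and $y$, the composite $\splus \circ (\alpha, \beta)$ equals the pointwise product in the topological group $\fSp{m+n}$ of the stabilized representatives $\s \circ \alpha$ and $\s \circ \beta$. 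The Eckmann--Hilton argument---that pointwise multiplication in a topological group induces ordinary addition on $\pi_{i}$---then yields $\splus_{*}(x,y) = \esta(x) + \esta(y)$.

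The main obstacle is largely bookkeeping: one must match the block-permutation conventions defining $\splus$ and $\s$ so that plugging the identity into one factor genuinely reproduces the stabilization, at least up to a conjugation lying in $\fSp{m+n}$. Once this identification is verified---exactly as in the analogous argument of \cite{TAA2021}---the remainder of the proof is purely formal, with no further homotopy-theoretic input required.
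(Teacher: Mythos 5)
Your argument is correct and is exactly the standard one the paper has in mind: the paper omits the proof, citing \cite{TAA2021}*{Proposition 2.5} as analogous, and that proof likewise factors $A\splus B=(A\splus I)(I\splus B)$, identifies the partial maps with stabilizations up to a null-homotopic conjugation (cf.\ Lemma \ref{L:sjSp}), and invokes the Eckmann--Hilton principle for topological groups. No gaps; the only caveat is the bookkeeping you already flag, namely that the intended stabilization $\s$ is $A\mapsto A\splus I_{2n}$ (the off-diagonal identity blocks in the paper's displayed formula for $\s$ are a typo).
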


\begin{corollary}\label{C:dsSp}
\cite{TAA2021}*{Corollary 2.6}
If $m<n$ and $i<4m+2$, then 
\[
\splus_{*}(x,y)=x+y \quad \text{for} \quad x\in \pi_{i}\fSp{m} \quad \text{and} \quad y\in\pi_{i}\fSp{n}.
\]
\end{corollary}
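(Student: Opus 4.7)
The plan is to read this corollary as a direct consequence of Proposition \ref{P:dsSp} together with the notational convention established at the end of Subsection \ref{subsub:stab}. Starting from the identity
\[
\splus_{*}(x,y)=\esta(x)+\esta(y) \in \pi_{i}\fSp{m+n},
\]
which holds in full generality by Proposition \ref{P:dsSp}, my job reduces to explaining why both occurrences of $\esta$ can be suppressed under the hypothesis $i<4m+2$ and $m<n$.

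First, I would record the connectivity bounds for the two stabilization maps involved. The stabilization $\s:\fSp{m}\to\fSp{m+n}$ is $(4m+2)$-connected, so the induced map $\esta:\pi_{i}\fSp{m}\to\pi_{i}\fSp{m+n}$ is an isomorphism for every $i<4m+2$. Since $m<n$ we have $4m+2\leq 4n+2$, hence $i<4n+2$ as well, and the stabilization $\s:\fSp{n}\to\fSp{m+n}$ is $(4n+2)$-connected, so $\esta:\pi_{i}\fSp{n}\to\pi_{i}\fSp{m+n}$ is also an isomorphism in this range.

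Second, I would apply the notational convention from Subsection \ref{subsub:stab}, under which an element $x\in\pi_{i}\fSp{m}$ with $i<4m+2$ is identified with its image $\esta(x)\in\pi_{i}\fSp{m+n}$, and similarly for $y\in\pi_{i}\fSp{n}$ with $i<4n+2$. Under this identification, Proposition \ref{P:dsSp} reads $\splus_{*}(x,y)=x+y$, which is exactly the statement of the corollary.

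There is no significant obstacle: the corollary is simply Proposition \ref{P:dsSp} restated in stabilized notation. The only point requiring attention is that the inequality $i<4m+2$ alone is enough to legitimize both identifications, which follows at once from the comparison $m<n$. If I wanted to, I could phrase the argument without any appeal to the notational convention by instead noting that the diagrams coming from iterated standard inclusions $\fSp{m}\hookrightarrow\fSp{m+1}\hookrightarrow\cdots\hookrightarrow\fSp{m+n}$ and $\fSp{n}\hookrightarrow\cdots\hookrightarrow\fSp{m+n}$ commute with $\splus$, but this adds no mathematical content to the one-line reduction above.
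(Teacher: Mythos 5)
Your proposal is correct and matches the intended argument: the paper states this corollary without proof as an immediate consequence of Proposition \ref{P:dsSp}, since under the hypotheses $i<4m+2$ and $m<n$ both stabilization maps $\fSp{m}\to\fSp{m+n}$ and $\fSp{n}\to\fSp{m+n}$ induce isomorphisms on $\pi_{i}$, so the $\esta$'s are suppressed by the identification convention of Subsection \ref{subsub:stab}.
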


\begin{proposition}\label{P:rdsSp}
\cite{TAA2021}*{Proposition 2.7}
Let $i\in \NN$. The homomorphism $\splus^{r}_{*}:\pi_{i}\fSp{n}\rightarrow \pi_{i}\fSp{rn}$ is  given by
\[
\splus^{r}_{*}(x)=r\esta(x)  \quad \text{for} \quad x\in \pi_{i}\fSp{n}.
\]
\end{proposition}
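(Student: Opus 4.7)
The plan is to express the $r$-fold direct sum operation $\splus^{r}$ as a product (in the group $\fSp{rn}$) of the componentwise stabilization maps $\s_{1}, \dots, \s_{r}$ introduced in Subsection \ref{subsub:operations}, and then exploit the fact that group multiplication in a topological group induces addition on homotopy groups. More precisely, I would first verify by a direct block-matrix computation the identity
\[
A^{\splus r} = \s_{1}(A)\cdot \s_{2}(A)\cdots \s_{r}(A) \quad \text{for every } A\in\fSp{n}.
\]
This holds because each $\s_{j}(A)$ equals the identity matrix except in the $j$-th $n\times n$ block (on each of the four symplectic quadrants), so the factors $\s_{j}(A)$ act on disjoint blocks and their product simply assembles $A$ in each of the $r$ diagonal slots, which is precisely $A^{\splus r}$.

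Next, I would recast this identity at the level of maps. Writing $\Delta:\fSp{n}\to \fSp{n}^{r}$ for the diagonal and $\mu_{r}:\fSp{rn}^{r}\to \fSp{rn}$ for iterated group multiplication, the previous step gives the factorization
\[
\splus^{r} \;=\; \mu_{r}\circ (\s_{1}\times\cdots\times\s_{r})\circ \Delta.
\]
Passing to homotopy groups and using the standard fact that on a topological group (or more generally an H-space) the induced product on $\pi_{i}$ agrees with the abelian group addition, one obtains
\[
\splus^{r}_{*}(x) \;=\; (\s_{1})_{*}(x) + (\s_{2})_{*}(x) + \cdots + (\s_{r})_{*}(x).
\]

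Finally, I would invoke Lemma \ref{L:sjSp} together with the subsequent identification $\esta = \pi_{i}(\s_{1}) = \cdots = \pi_{i}(\s_{r})$ established in Subsection \ref{subsub:stab}, which makes each summand equal to $\esta(x)$ and yields $\splus^{r}_{*}(x) = r\,\esta(x)$. I do not expect a genuine obstacle here: the only step requiring any care is the block-matrix verification that the $\s_{j}(A)$ commute and multiply to $A^{\splus r}$, which reduces to observing that $\sigma_{j}(X)\sigma_{k}(Y) = \sigma_{k}(Y)\sigma_{j}(X)$ for $j\neq k$ and $\sigma_{j}(X)\sigma_{j}(Y) = \sigma_{j}(XY)$, both immediate from the definition of $\sigma_{j}$. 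The rest is a formal consequence of the H-space structure of $\fSp{rn}$ and the previously established connectivity results.
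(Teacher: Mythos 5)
Your proposal is correct and is essentially the argument the paper intends: the paper omits the proof, deferring to \cite{TAA2021}*{Proposition 2.7}, but the same strategy---factor the map as $\mu_{r}\circ(\s_{1}\times\cdots\times\s_{r})\circ\Delta$, use that group multiplication induces addition on $\pi_{i}$, and invoke the pointed homotopies between the $\s_{j}$ from Lemma \ref{L:sjSp}---is exactly what appears in the proof of Lemma \ref{L:tpRSpO}. The only caveat is cosmetic: in the block-matrix verification the off-diagonal quadrants of $\s_{j}(A)$ must carry $A_{12}$ (resp.\ $A_{21}$) padded by \emph{zero} blocks rather than identity blocks (as your phrase ``equals the identity matrix except in the $j$-th block'' correctly suggests, and as the paper's displayed definition of $\s$ should be read), which is precisely what makes the cross terms in the product $\s_{1}(A)\cdots\s_{r}(A)$ vanish.
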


\begin{corollary}\label{C:rdsSp}
\cite{TAA2021}*{Corollary 2.8}
If $i<4n+2$, then 
\[
\splus_{*}^{r}(x)=rx \quad \text{for} \quad x\in \pi_{i}\fSp{n}.
\]
\end{corollary}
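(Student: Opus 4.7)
The plan is to deduce this immediately from Proposition \ref{P:rdsSp} together with the identification set up in the stabilization notation discussion of Subsection \ref{subsub:stab}. Proposition \ref{P:rdsSp} gives the general formula $\splus^{r}_{*}(x) = r\,\esta(x)$ for all $x\in\pi_{i}\fSp{n}$ without any restriction on $i$, where $\esta:\pi_{i}\fSp{n}\to\pi_{i}\fSp{rn}$ is the homomorphism induced by the stabilization map $\s:\fSp{n}\to\fSp{rn}$.

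Next I would invoke the connectivity statement established in Subsection \ref{subsub:stab}: the stabilization map $\s:\fSp{n}\to\fSp{rn}$ is $(4n+2)$-connected (being a composite of the standard inclusions, each of which is highly connected by the fibration $\fSp{k}\hookrightarrow\fSp{k+1}\to S^{4k+3}$). Consequently, for $i<4n+2$ the induced map $\esta:\pi_{i}\fSp{n}\to\pi_{i}\fSp{rn}$ is an isomorphism, and by the convention introduced at the end of Subsection \ref{subsub:stab}, we identify $x\in\pi_{i}\fSp{n}$ with $\esta(x)\in\pi_{i}\fSp{rn}$, writing simply $x=\esta(x)$.

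Substituting this identification into the formula from Proposition \ref{P:rdsSp} yields
\[
\splus^{r}_{*}(x) = r\,\esta(x) = rx,
\]
which is the desired statement. There is no substantial obstacle: the entire argument is a one-line consequence of the previous proposition together with the stabilization identification, and the analogous corollary for the unitary/orthogonal cases is worked out in \cite{TAA2021}*{Corollary 2.8}.
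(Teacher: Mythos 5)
Your argument is correct and matches the intended derivation: the paper omits the proof (deferring to the analogous \cite{TAA2021}*{Corollary 2.8}), but the corollary is exactly Proposition \ref{P:rdsSp} combined with the $(4n+2)$-connectivity of the stabilization map and the identification $x=\esta(x)$ from Subsection \ref{subsub:stab}.
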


The following maps will be used in the proofs presented in the subsequent subsections.
\begin{definition}
Let $p, q \in \NN$. We denote by $L$ and $R$ the restrictions of the tensor product map $\tensor:\fM{p}\times\fM{q}\to \fM{pq}$ to $\fM{p} \times \{I_{q}\}$ and $\{I_{p}\} \times  \fM{q}$, respectively. This is, $L$ and $R$ are the homomorphisms:
\[
L:\fM{p}\rightarrow \fM{pq}, \quad L(A)=A\tensor I_{q},
\]
and
\[
R:\fM{n}\rightarrow \fM{pq}, \quad R(B)=I_{p}\tensor B,
\]
for all $A \in \fM{p}$ and $B \in \fM{q}$.
\end{definition}
By the mixed-product property of tensor products, we observe that for all $A \in \fM{p}$ and $B \in \fM{q}$:
\begin{equation}\label{mixedproduct}
A\tensor B=(A\tensor I_{q})(I_{p}\tensor B)=L(A)R(B)
\end{equation}
for all $A \in \fM{p}$ and $B \in \fM{q}$.

\subsection{Tensor product of symplectic and orthogonal matrices in the stable range}
This subsection aims to prove Proposition \ref{P:tpSpO}, which determines the homomorphisms induced on homotopy groups by the tensor product map on symplectic and orthogonal groups $\tensor:\fSp{m} \times \fO{n}\to \fSp{mn}$. 


\begin{lemma}\label{L:tpLSpO}
There is a basepoint preserving homotopy $H$ from $L$ to the $n$-fold direct sum map of symplectic matrices $\splus^{n}:\fSp{m} \to \fSp{mn}$ such that for all $t \in [0,1]$, $H(-,t)$ is a homomorphism.
\end{lemma}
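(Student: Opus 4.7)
The plan is to identify both $L$ and $\splus^{n}$ as realizations of the same symplectic representation of $\fSp{m}$ on $\CC^{2mn}$---namely, the $n$-fold amplification of the defining representation on $\CC^{2m}$---and exhibit the change of realization as conjugation by an element of $\fSp{mn}$. Path-connectedness of the target then produces the required homotopy through homomorphisms automatically.

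To carry this out, I would first write $L(A)=A\tensor I_{n}$ and $\splus^{n}(A)$ explicitly in the standard symplectic basis of $\CC^{2mn}$. Starting from the tensor basis $\{e_{i}\tensor f_{j}\}$ of $\CC^{2m}\tensor \CC^{n}$, where $e_{1},\dots,e_{2m}$ is the symplectic basis of $\CC^{2m}$ and $f_{1},\dots,f_{n}$ an orthonormal basis of $\CC^{n}$, I would order the vectors so that the positive half $\{e_{i}\tensor f_{j}:i\leq m\}$ precedes the negative half $\{e_{i}\tensor f_{j}:i>m\}$; in this ordering the form $B_{\mathrm{skew}}\tensor B_{\mathrm{sym}}$ becomes the standard skew form $J_{mn}$ on $\CC^{2mn}$. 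A direct matrix computation then shows that $L(A)$ and $\splus^{n}(A)$ differ by a permutation $P$ of basis vectors that reorders each half (passing between ``$i$ varies fastest'' and ``$j$ varies fastest''). Crucially, $P$ has block form $P=\diag(Q,Q)$ for a permutation matrix $Q\in\fGL{mn}$ acting identically on the two halves, so the condition $P^{\tr}J_{mn}P=J_{mn}$ reduces to $Q^{\tr}Q=I_{mn}$, which holds because $Q$ is a permutation matrix. Hence $P\in\fSp{mn}$ and $\splus^{n}(A)=P\,L(A)\,P^{-1}$ for all $A\in\fSp{m}$.

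With this conjugation identity in hand, I would invoke path-connectedness of $\fSp{mn}$ as a connected complex Lie group to choose a continuous path $\gamma\colon[0,1]\to\fSp{mn}$ with $\gamma(0)=I_{2mn}$ and $\gamma(1)=P$, and define
\[
H(A,t)=\gamma(t)\,L(A)\,\gamma(t)^{-1}.
\]
For each $t$, the map $H(-,t)$ is the composition of $L$ with the inner automorphism $\Inn_{\gamma(t)}$ of $\fSp{mn}$ and is therefore a homomorphism; the endpoints are $H(-,0)=L$ and $H(-,1)=P\,L(-)\,P^{-1}=\splus^{n}$. Basepoint preservation follows immediately from $L(I_{2m})=I_{2mn}$, which gives $H(I_{2m},t)=I_{2mn}$ for every $t$.

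The main obstacle is the explicit bookkeeping in the middle step: pinning down the convention that identifies $\Aut(\CC^{2m}\tensor\CC^{n},\,B_{\mathrm{skew}}\tensor B_{\mathrm{sym}})$ with $\fSp{mn}$, writing $P$ in block-diagonal form relative to the symplectic polarization, and verifying that $P$ actually lies in $\fSp{mn}$ rather than merely in $\fGL{2mn}$. Everything downstream---the existence of the path $\gamma$, the preservation of the group structure under the inner automorphisms $\Inn_{\gamma(t)}$, and the continuity of $H$---is formal.
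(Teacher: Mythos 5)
Your proof is correct and takes essentially the same approach as the paper: both exhibit $L(A)$ and $A^{\splus n}$ as conjugate by a block-diagonal permutation matrix $\diag(Q,Q)$ lying in $\fSp{mn}$, and then use path-connectedness of $\fSp{mn}$ to drag the conjugation to the identity through inner automorphisms (a step the paper delegates to Lemma~3.2 of the cited orthogonal-involution paper). Your explicit verification that $\diag(Q,Q)$ preserves the standard skew form is a detail the paper leaves implicit, but the argument is the same.
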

\begin{proof}
Let $A\in \fSp{m}$,
\[
L(A)=%
\begin{pmatrix}
A_{11}\tensor I_{n} & A_{12}\tensor I_{n}\\
A_{21}\tensor I_{n} & A_{22}\tensor I_{n}\\
\end{pmatrix}
\quad \text{and} \quad %
A^{\splus n}=
\begin{pmatrix}
A_{11}^{\oplus n} & A_{12}^{\oplus n}\\[1.5mm]
A_{21}^{\oplus n} & A_{22}^{\oplus n}
\end{pmatrix}.
\]

Let $P_{m,n}$ be the permutation matrix
\begin{align*}
P_{m,n}=[&e_{1},e_{n+1},e_{2n+1},\dots, e_{(m-1)n+1},%
		      e_{2},e_{n+2},e_{2n+2},\dots, e_{(m-1)n+2},\\%
		   &\dots,\\
		   &e_{n-1},e_{2n-1},e_{3n-1},\dots,e_{mn-1},%
		     e_{n},e_{2n},e_{3n},\dots, e_{(m-1)n},e_{mn}]
\end{align*}
where $e_{i}$ is the $i$-th standard basis vector of $\CC^{mn}$ written as a column vector. Observe that $A_{ij}\tensor I_{n}=P_{m,n}\,A_{ij}^{\oplus n}\,P_{m,n}^{-1}$ for $i,j=1,2$. This implies
\[
L(A)=%
\begin{pmatrix}
P_{m,n} & 0\\
0 & P_{m,n}
\end{pmatrix}%
A^{\splus n}%
\begin{pmatrix}
P_{m,n}^{-1} & 0\\
0 & P_{m,n}^{-1}
\end{pmatrix}.
\]

The result follows from \cite[Lema 3.2]{TAAwOI2022}.
\end{proof}


\begin{lemma}\label{L:tpRSpO}
Let $i\in \NN$. Then
\[
R_{*}(x)=m\esta(d_{*}(x)) \quad \text{for} \quad x \in \pi_{i}\fO{n},
\]
where $d_{*}:\pi_{i}\fO{n}\to \pi_{i}\fSp{mn}$ is the homomorphism induced by the doubling homomorphism.
\end{lemma}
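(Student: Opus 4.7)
The plan is to identify $R$ with the composite $\splus^{m}\circ d\colon \fO{n}\xrightarrow{d}\fSp{n}\xrightarrow{\splus^{m}}\fSp{mn}$ and then invoke Proposition~\ref{P:rdsSp}. First, unpacking $R(B)=I_{2m}\tensor B$ in the standard Kronecker product convention on the lexicographic basis $e_{i}\tensor f_{j}$ of $\CC^{2m}\tensor\CC^{n}\cong\CC^{2mn}$, the $(k,i)$-block of $I_{2m}\tensor B$ equals $\delta_{ki}B$, so $R(B)$ is the $2mn\times 2mn$ block-diagonal matrix with $2m$ copies of $B$ on the diagonal. Under this same basis identification the form $J_{2m}\tensor I_{n}$ coincides with the standard skew form $J_{2mn}$, which is why $R$ lands in $\fSp{mn}$ with its standard symplectic structure.

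Next, I would compute $\splus^{m}(d(B))$. Since $d(B)=\begin{pmatrix} B & 0 \\ 0 & B \end{pmatrix}$ has $(d(B))_{11}=(d(B))_{22}=B$ and $(d(B))_{12}=(d(B))_{21}=0$, the formula defining $\splus^{m}$ gives
\[
\splus^{m}(d(B))=\begin{pmatrix} B^{\oplus m} & 0 \\ 0 & B^{\oplus m} \end{pmatrix},
\]
which as a single $2mn\times 2mn$ matrix is again block-diagonal with the same $2m$ copies of $B$ on the diagonal. Therefore $R=\splus^{m}\circ d$ as continuous group homomorphisms $\fO{n}\to\fSp{mn}$.

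Passing to induced maps on $\pi_{i}$ and applying Proposition~\ref{P:rdsSp} with $r=m$ to $y=d_{*}(x)\in\pi_{i}\fSp{n}$ then yields
\[
R_{*}(x)=(\splus^{m})_{*}(d_{*}(x))=m\,\esta(d_{*}(x)),
\]
which is exactly the required identity.

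The calculation itself is elementary; the only step deserving care is the basis identification $\CC^{2m}\tensor\CC^{n}\cong\CC^{2mn}$ and verifying that both matrix descriptions live in the same copy of $\fSp{mn}$. Should the paper's conventions instead introduce a nontrivial permutation matrix $P\in\fSp{mn}$ intertwining the two expressions, the same conclusion would follow from Lemma~3.2 of~\cite{TAAwOI2022}: since $\fSp{mn}$ is path-connected, conjugation by $P$ is pointed-homotopic to the identity, so $R\simeq\splus^{m}\circ d$ and the induced map on homotopy groups is unchanged.
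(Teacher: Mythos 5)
Your proof is correct and follows essentially the same route as the paper: the paper also factors $R$ through the doubling map $d$ followed by an $m$-fold diagonal embedding into $\fSp{mn}$ (written there as the product of the block inclusions $\sigma_{1},\dots,\sigma_{m}$, which is exactly the content of $\splus^{m}$ and of Proposition~\ref{P:rdsSp}). Your matrix identification is exact --- both $I_{2m}\tensor B$ and $\splus^{m}(d(B))$ equal $\diag(B,\dots,B)$ with $2m$ copies --- so the fallback via conjugation by a permutation is not even needed.
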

\begin{proof}
Consider the maps $\sigma_{j}:\fM{2n}\to \fM{2mn}$ given by 
\[
\sigma_{j}(X)=\diag(I_{2n},\dots,I_{2n},X,I_{2n},\dots,I_{2n})
\]
where $X$ is the $j$-th position for all $j=1,\dots,m$. Observe that $R$ can be decomposed as the composite:
\begin{equation*}
\begin{tikzcd}
\fO{n} \arrow[d,"d",swap] & A \arrow[d,mapsto] \\
\fSp{n} \arrow[d,"\Delta",swap] & d(A) \arrow[d,mapsto] \\
(\fSp{n})^{\times m} \arrow[d,"\sigma_{1}\times \cdots \times\sigma_{m}",swap] & \Bigl(d(A),\dots,d(A)\Bigr) \arrow[d,mapsto] \\
(\fSp{mn})^{\times m} \arrow[d,"\m",swap]  & \Bigl((\sigma_{1}\circ d)(A),\dots,(\sigma_{m}\circ d)(A)\Bigr) \arrow[d,mapsto] \\
\fSp{mn} & (\sigma_{1}\circ d)(A)\boldsymbol{\cdot}\cdots\boldsymbol{\cdot}(\sigma_{m}\circ d)(A).
\end{tikzcd}
\end{equation*}
By \cite[Lemma 3.3, Proposition 3.7]{TAAwOI2022}, all maps $\sigma_{j}$ are homotopic to $\sigma_{1}$, and thus, $R_{*}(x)=m\esta(d_{*}(x))$.
\end{proof}

\begin{proposition}\label{tensorSpO}
Let $i\in\NN$. The homomorphism $\tensor_{*}:\pi_{i}\fSp{m}\times\pi_{i}\fO{n}\to\pi_{i}\fSp{mn}$ is given by
\[
\tensor_{*}(x,y)=n\esta(x) + m\esta(d_{*}(y)) \quad \text{for} \quad x\in \pi_{i}\fSp{m} \quad \text{and} \quad y \in \pi_{i}\fO{n}.
\]
\end{proposition}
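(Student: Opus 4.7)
The plan is to reduce the statement to the two preceding lemmas by exploiting the mixed-product property \eqref{mixedproduct}, which writes $A \tensor B = L(A) \cdot R(B)$ for $A \in \fSp{m}$ and $B \in \fO{n}$. This identity expresses the tensor product operation as the composite
\[
\fSp{m} \times \fO{n} \xrightarrow{\,L \times R\,} \fSp{mn} \times \fSp{mn} \xrightarrow{\,\mu\,} \fSp{mn},
\]
where $\mu$ denotes matrix multiplication in the topological group $\fSp{mn}$. The first step of the proof would be to justify this factorization, verifying that $L(A)$ lies in $\fSp{mn}$ when $A \in \fSp{m}$ and $R(B)$ lies in $\fSp{mn}$ when $B \in \fO{n}$, and noting that both $L$ and $R$ are basepoint-preserving since $L(I_{2m}) = I_{2mn} = R(I_{n})$.

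Next I would apply $\pi_{i}$ to this composite. Since $\fSp{mn}$ is a topological group with basepoint the identity, the standard Eckmann--Hilton argument gives $\mu_{*}(u,v) = u + v$ in $\pi_{i}\fSp{mn}$. Therefore
\[
\tensor_{*}(x,y) = L_{*}(x) + R_{*}(y) \quad \text{for} \quad x \in \pi_{i}\fSp{m}, \ y \in \pi_{i}\fO{n}.
\]

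Finally, I would invoke the two ingredients already proved. Lemma \ref{L:tpLSpO} provides a basepoint-preserving homotopy through homomorphisms from $L$ to the $n$-fold direct sum $\splus^{n} : \fSp{m} \to \fSp{mn}$, so $L_{*} = (\splus^{n})_{*}$; combining this with Proposition \ref{P:rdsSp} gives $L_{*}(x) = n\,\esta(x)$. Lemma \ref{L:tpRSpO} directly yields $R_{*}(y) = m\,\esta(d_{*}(y))$. Substituting these two expressions into the formula above produces the claimed identity.

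The argument is essentially assembly once the two lemmas are in hand, so there is no serious obstacle; the only point that deserves care is the observation that $\mu$ induces the additive group structure on $\pi_{i}$ of a topological group, together with checking that $L$ and $R$ are basepoint-preserving homomorphisms so that the homotopy-group formulas truly apply to $\tensor_{*}$ via the factorization.
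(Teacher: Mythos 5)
Your proposal is correct and follows essentially the same route as the paper: the paper's proof likewise combines equation \eqref{mixedproduct} (giving $\tensor_{*}(x,y)=L_{*}(x)+R_{*}(y)$ via the group multiplication on $\fSp{mn}$) with Lemma \ref{L:tpLSpO}, Proposition \ref{P:rdsSp}, and Lemma \ref{L:tpRSpO}. Your version merely spells out the Eckmann--Hilton step and the basepoint checks that the paper leaves implicit.
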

\begin{proof}
Let $x\in \pi_{i}\fSp{m}$ and $y \in \pi_{i}\fO{n}$. By Proposition \ref{P:rdsSp}, Lemmas \ref{L:tpLSpO} and \ref{L:tpRSpO}, and equation \eqref{mixedproduct} we have that:
\[
\tensor_{*}(x,y)=L_{*}(x)+R_{*}(y)=\splus^{n}_{*}(x)+\oplus^{m}_{*}(d_{*}(y))=n\esta(x)+m\esta(d_{*}(y))
\]
for all $i\in \NN$.
\end{proof}

In view of Proposition \ref{tensorSpO}, we focus our attention on describing the homomorphism induced on homotopy groups by the doubling homomorphism $d$ in the stable range for $\fO{n}$. 

\begin{proposition}\label{P:doubling}
Let $i<n-1$. The homomorphism $d_{*}:\pi_{i}\fO{n} \longrightarrow \pi_{i}\fSp{n}$ 
is an isomorphism onto $2\pi_{i}\fSp{n}$ if $i\equiv 3,7$ (mod 8). In all other cases, either the source or the target is trivial.
\end{proposition}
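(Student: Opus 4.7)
The plan is a case analysis modulo $8$. Since $\fO{n}$ deformation retracts onto its maximal compact subgroup $\Or(n,\RR)$, real Bott periodicity gives $\pi_{i}\fO{n}\cong \ZZ/2,\ZZ/2,0,\ZZ,0,0,0,\ZZ$ for $i\equiv 0,1,2,3,4,5,6,7\pmod 8$ respectively. Comparing with Table \ref{tab:lowhgSp} for $\pi_{i}\fSp{n}$, I observe that in every residue except $i\equiv 3,7\pmod 8$ either $\pi_{i}\fO{n}$ or $\pi_{i}\fSp{n}$ vanishes, and the conclusion holds vacuously in those cases.

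For the remaining residues both groups are $\ZZ$, and the claim reduces to showing $d_{*}=\pm 2$. My approach is to fit $d$ into the commutative diagram of inclusions of algebraic groups
\[
\begin{tikzcd}[row sep=large]
\fO{n} \arrow[r,"d"] \arrow[d,"c"'] & \fSp{n} \arrow[d,"f"]\\
\fSL{n,\CC} \arrow[r,"\oplus^{2}"'] & \fSL{2n,\CC},
\end{tikzcd}
\]
which commutes because both paths send $A$ to $\diag(A,A)$. The bottom map induces multiplication by $2$ on $\pi_{i}$ in the stable range by the argument parallel to Corollary \ref{C:rdsSp}, giving the relation $f_{*}\circ d_{*}=2\,c_{*}$. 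After passing to maximal compact subgroups, $c$ and $f$ become the classical complexification and complex-forgetful maps of real K-theory; by the Bott--Adams computations, exactly one of $c_{*},f_{*}$ acts as $\times 2$ on $\pi_{4k-1}$ in the stable range and the other as $\times 1$ (with the parity depending on $k$), so the identity $f_{*}d_{*}=2\,c_{*}$ forces $d_{*}=\pm 2$ in both residues. Hence $d_{*}$ is injective with image $2\pi_{i}\fSp{n}$, as required.

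The main obstacle I anticipate is the K-theoretic input: although classical, pinning down the factors of $c_{*}$ and $f_{*}$ on $\pi_{4k-1}$ requires either careful citation of Bott--Adams computations or a direct characteristic class calculation, for instance via $p_{1}(V)=-c_{2}(V\otimes_{\RR}\CC)$ applied to the tautological bundles over $S^{4}$, which pins down the factor at $i=3$ and then extends by Bott periodicity to all higher $i\equiv 3,7\pmod 8$.
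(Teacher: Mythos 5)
Your reduction to the residues $i\equiv 3,7\pmod 8$ is correct, and so is the commutative square (up to the cosmetic point that $\fO{n}$ contains matrices of determinant $-1$, so the bottom row should be $\fGL{n}\xrightarrow{\oplus^{2}}\fGL{2n}$ rather than special linear groups). The relation you extract from it, $f_{*}\circ d_{*}=2\,c_{*}$, is valid in the stable range. The problem is the final step. You correctly recall that on $\pi_{4k-1}$ exactly one of $c_{*}$ (the complexification $\fO{n}\hookrightarrow\fGL{n}$) and $f_{*}$ (the forgetful inclusion $\fSp{n}\hookrightarrow\fGL{2n}$) is multiplication by $2$ and the other is an isomorphism: concretely $c_{*}=\times 2$, $f_{*}=\times 1$ for $i\equiv 3\pmod 8$, and the roles are reversed for $i\equiv 7\pmod 8$ (these are exactly the values used in Theorem \ref{T:cprima} and in the proof of Corollary \ref{C:tpSpSp}). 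But solving $f_{*}d_{*}=2c_{*}$ then yields $d_{*}=\pm 4$ in the first case and $d_{*}=\pm 1$ in the second; one obtains $\pm 2$ only if $c_{*}$ and $f_{*}$ act by the \emph{same} factor, which is precisely what the complementarity you invoke excludes. So ``forces $d_{*}=\pm 2$ in both residues'' is a non sequitur, and the proof as written does not establish the stated conclusion. Your own proposed characteristic-class check exposes this at $i=3$: the generator of $\pi_{3}\fO{n}\cong\widetilde{KO}(S^{4})$ has $p_{1}=\pm 2$, so its image under $d$ has second Chern class $\pm 4$ times the generator of $H^{4}(S^{4};\ZZ)$, whereas the generator of $\pi_{3}\fSp{n}\cong\widetilde{KSp}(S^{4})$ (the quaternionic Hopf bundle) has $c_{2}=\pm 1$.

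Two further remarks. First, this is not a gap you can close with a sharper citation: the relation $f_{*}d_{*}=2c_{*}$ together with the standard Bott computations is genuinely incompatible with the factor $2$ in both residues, so you need to reconcile your (correct) intermediate identity with the target statement rather than assume the arithmetic works out. Second, for comparison, the paper avoids the division by $f_{*}$ altogether: it factors $d$ as the composite $\Or(n,\RR)\xrightarrow{c}\fU{n}\xrightarrow{q}\U(n,\HH)\cong\fU{2n}\cap\fSp{n}$, so that $d_{*}=q_{*}\circ c_{*}$ is a \emph{product} of two factors read off from Mimura--Toda, rather than a quotient. If you pursue your route, you must carry the two residues separately and accept whatever factors the division actually produces.
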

\begin{proof}
Let $c:\Or(n,\RR)\hookrightarrow \fU{n}$ and $q:\fGL{n} \to \GL(n,\HH)$ be the inclusions. Since the conjugation on $\HH$ extends that on $\CC$, the inclusion $q$ restricts to $q:\fU{n} \to \U(n,\HH)$. 

The homomorphism of $\RR$-algebras $g: \M(n,\HH) \to  \fM{2n}$ given by
\[
g(A + jB)=%
\begin{pmatrix}
A & -\overline{B}\\
B & \overline{A}
\end{pmatrix}
\]
is injective, and its restriction to $\U(n,\HH)$ is an isomorphism, $g: \U(n,\HH) \longrightarrow \fU{2n}\cap \fSp{n}$  \cite[pp. 22--23]{MiToToG1991}. Thus we arrive at the following homomorphism
\begin{equation*}
\begin{tikzcd}[column sep=large, row sep=0.5pt]
\fU{n} \arrow[r,"q"] & \U(n,\HH) \arrow[r,"g"] & \fU{2n}\cap\fSp{n} \arrow[r,hook,"\simeq"] \arrow[l,leftarrow,"\iso"]& \fSp{n}\\
A \arrow[rrr,mapsto] & & & \begin{pmatrix}
A & 0\\
0 & \overline{A}
\end{pmatrix}.
\end{tikzcd}
\end{equation*}

Hence, we can factor $d$ as follows
\begin{equation*}
\begin{tikzcd}[row sep=large]
\fO{n} \arrow[rr,"d"]  &  & \fSp{n} \\
\Or(n,\RR) \arrow[r,"c"] \arrow[u,hook,"\simeq"] & \fU{n} \arrow[r,"q\circ g"] & \fU{2n}\cap\fSp{n}. \arrow[u,hook,"\simeq"]
\end{tikzcd}
\end{equation*}

The effect of $d_{*}$ on homotopy groups in the stable range for $\fO{n}$ is identified with the effect of $q_{*}\circ g_{*}\circ c_{*}$. 
From \cite[I.4.11, I.4.12]{MiToToG1991}, \cite[Theorem IV.5.12]{MiToToG1991}, and \cite[IV.6.1.(2)]{MiToToG1991} the result follows.
\end{proof}

\begin{proposition}\label{P:tpSpO}
Let $i<4m+2$ and $i<n-1$, the homomorphism $\tensor_{*}:\pi_{i}\fSp{m}\times\pi_{i}\fO{n} \to \pi_{i}\fSp{mn}$
is given by 
\[
\otimes_{*}(x,y)=nx+2my \quad \text{for} \quad x \in \pi_{i}\fSp{m} \quad \text{and} \quad y\in \pi_{i}\fO{n}.
\]
\end{proposition}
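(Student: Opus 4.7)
The plan is to assemble Propositions \ref{tensorSpO} and \ref{P:doubling} and then read off the result using the stabilization conventions of Subsection \ref{subsub:stab}. Nothing new needs to be computed; the work is entirely in the interpretation.

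First, I would invoke Proposition \ref{tensorSpO} verbatim to obtain the unconditional identity
\[
\tensor_{*}(x,y) = n\esta(x) + m\esta(d_{*}(y)),
\]
valid for every $i$, where the first summand lives in $\pi_{i}\fSp{mn}$ via $\esta:\pi_{i}\fSp{m}\to \pi_{i}\fSp{mn}$ and the second via the composite $\pi_{i}\fO{n}\xrightarrow{d_{*}}\pi_{i}\fSp{n}\xrightarrow{\esta}\pi_{i}\fSp{mn}$. The hypothesis $i<4m+2$ guarantees, by Subsection \ref{subsub:stab}, that the stabilization $\esta:\pi_{i}\fSp{m}\to\pi_{i}\fSp{mn}$ is an isomorphism, so the convention $x=\esta(x)$ simplifies the first summand to $nx$. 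The inequality $n-1<4n+2$ similarly lets me suppress the outer $\esta$ in the second summand.

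Next I would apply Proposition \ref{P:doubling}, whose hypothesis $i<n-1$ is exactly the second assumption. There are two cases. If $i\not\equiv 3,7\pmod 8$, then either $\pi_{i}\fO{n}$ or $\pi_{i}\fSp{n}$ is zero; in the first case $y=0$, and in the second $\pi_{i}\fSp{mn}$ is also zero by the stable isomorphism $\esta$, so both sides of the claimed equality vanish and the formula reads as $nx$ with the convention that the term $2my$ is the zero element. If $i\equiv 3,7\pmod 8$, then $\pi_{i}\fO{n}$ and $\pi_{i}\fSp{n}\cong \pi_{i}\fSp{mn}$ are both infinite cyclic, and Proposition \ref{P:doubling} states that $d_{*}$ is an isomorphism onto the subgroup of index two. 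Choosing generators so that $d_{*}$ is identified with multiplication by $2$, i.e.\ $\esta(d_{*}(y))=2y$ in $\pi_{i}\fSp{mn}$, gives $m\esta(d_{*}(y))=2my$.

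Combining the two summands yields $\tensor_{*}(x,y)=nx+2my$, which is the stated formula.

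The main obstacle, if one can call it that, is notational rather than mathematical: the symbol $y$ in the expression $nx+2my$ stands for the canonical image of $y\in \pi_{i}\fO{n}$ in $\pi_{i}\fSp{mn}$ via $\esta\circ d_{*}$ divided by two, which is only well-defined precisely in the cases where $d_{*}$ lands in the index-two subgroup and $\esta$ is an isomorphism. The case analysis above verifies that this identification is consistent in every congruence class of $i$ modulo $8$.
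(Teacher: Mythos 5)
Your proposal is correct and follows exactly the paper's own route: the published proof is a one-line citation of Proposition \ref{tensorSpO}, Proposition \ref{P:doubling}, and the stabilization identifications, which is precisely the assembly you carry out. The extra case analysis modulo $8$ that you spell out is implicit in the paper but adds nothing different in substance.
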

\begin{proof}
Corollary \ref{C:rdsSp}, \cite[Corollary 3.8]{TAAwOI2022}, and Propositions \ref{tensorSpO} and \ref{P:doubling} yield the result.
\end{proof}

\subsection{Tensor product of symplectic matrices in the stable range}
This subsection aims to prove Corollary \ref{C:tpSpSp}, which determine the homomorphisms induced on homotopy groups by the tensor product map on symplectic groups $\stensor:\fSp{m} \times \fSp{n}\to \fO{4mn}$. Recall that $\stensor:\Sp(m) \times \fSp{n} \to \Or(4mn)$ is equal to the composite 
\begin{equation*}
\begin{tikzcd}
\fSp{m} \times \fSp{n} \arrow[r,"\tensor"] & G \arrow[r,"\Inn_{P}"] & \fO{4mn},
\end{tikzcd}
\end{equation*}
where $G\leq \fGL{4mn}$ and $P\in \fGL{4mn}$ is the matrix representing a change to an orthonormal matrix as explained in Section \ref{subsub:operations}. 

\begin{proposition}\label{P:tpSpSp}
Let $m\leq n$ and $i<4m+2$, the homomorphism 
\[
\bigl(c''_{*}\circ\stensor_{*}\bigr):\pi_{i}\fSp{m}\times\pi_{i}\fSp{n} \to \pi_{i}\fGL{4mn}
\]
is given by
\[
\bigl(c''_{*}\circ\stensor_{*}\bigr)(x,y)=2nc'_{*}(x)+2mc'_{*}(y)  \quad \text{for} \quad x \in \pi_{i}\fSp{m} \quad \text{and} \quad y \in \pi_{i}\fSp{n},
\]
where $c'$ and $c''$ denote the inclusions
\begin{equation*}
\begin{tikzcd}
c':\fSp{n} \arrow[r,hook] & \fGL{2n} &\text{and}& c'':\fO{n} \arrow[r,hook] & \fGL{2n}.
\end{tikzcd}
\end{equation*}
\end{proposition}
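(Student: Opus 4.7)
The plan is to reduce the computation to the literal tensor product of matrices inside $\fGL{4mn}$ and then to invoke a mixed-product decomposition in the spirit of Proposition \ref{tensorSpO}.

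First, I would exploit the commutative diagram defining $\stensor$. Let $T:\fSp{m}\times\fSp{n}\to \fGL{4mn}$ denote the literal tensor product of matrices $T(A,B)=A\tensor B$, precomposed with $c'$ on each factor. The defining square for $\stensor$ together with the inclusion $c'':\fO{4mn}\hookrightarrow \fGL{4mn}$ gives
\[
c''\circ\stensor=\Inn_{P}\circ T.
\]
Because $\Inn_{P}$ is conjugation by an element of the path-connected group $\fGL{4mn}$, it is pointed homotopic to the identity by \cite{TAAwOI2022}*{Lemma 3.2}, so on homotopy groups $(c''\circ\stensor)_{*}=T_{*}$. After this step the problem is entirely linear and no longer involves any change-of-basis matrix.

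Next, I would apply the mixed-product property \eqref{mixedproduct} to $T$. Setting
\[
L:\fSp{m}\to\fGL{4mn},\quad L(A)=A\tensor I_{2n},
\]
\[
R:\fSp{n}\to\fGL{4mn},\quad R(B)=I_{2m}\tensor B,
\]
these are commuting homomorphisms and $T(A,B)=L(A)R(B)$, whence $T_{*}(x,y)=L_{*}(x)+R_{*}(y)$. It remains to compute each of $L_{*}$ and $R_{*}$ separately in the stable range.

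For $L$, the same permutation-matrix trick used in Lemma \ref{L:tpLSpO} produces $P_{2m,2n}\in\fGL{4mn}$ with $A\tensor I_{2n}=P_{2m,2n}\,\diag(A,\dots,A)\,P_{2m,2n}^{-1}$, where there are $2n$ copies of $A$ on the diagonal. Since this conjugation is inner in the path-connected group $\fGL{4mn}$, a second application of \cite{TAAwOI2022}*{Lemma 3.2} shows that $L$ is pointed homotopic to the $2n$-fold block-diagonal map $A\mapsto \diag(c'(A),\dots,c'(A))$. The stabilization argument underlying Proposition \ref{P:rdsSp}, transplanted to $\fGL{}$ (where the standard inclusions are again highly connected), then yields $L_{*}(x)=2n\,c'_{*}(x)$ provided $i<4m+2$. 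The analogous computation for $R$ is easier because $I_{2m}\tensor B=\diag(B,\dots,B)$ is already block diagonal with $2m$ copies of $B$, giving $R_{*}(y)=2m\,c'_{*}(y)$; the hypothesis $m\le n$ combined with $i<4m+2$ keeps the $\fSp{n}$-factor inside its stable range as well. Adding the two contributions produces the claimed formula. The main obstacle is the clean first step: making precise that the inner-automorphism $\Inn_{P}$ may be discarded on the level of homotopy groups, so that the computation reduces to one in $\fGL{4mn}$ rather than in the subtler subgroup $\Or(4mn,\CC)$; once this reduction is done, the remaining work is a direct parallel of the proof of Proposition \ref{tensorSpO}.
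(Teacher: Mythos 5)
Your argument is correct and follows essentially the same route as the paper: kill the inner automorphism $\Inn_{P}$ on $\pi_{*}\fGL{4mn}$ (the paper encodes this in the commutative diagrams \eqref{cd:cLL}--\eqref{cd:cRR}), split the tensor product into the commuting homomorphisms $L$ and $R$ via \eqref{mixedproduct}, and identify $R$ with the $2m$-fold direct sum and $L$ with a conjugate of the $2n$-fold direct sum, which is exactly the content of the cited results from \cite{TAA2021} that the paper invokes. The only cosmetic difference is that you perform the reduction to $\fGL{4mn}$ once globally and re-derive the homotopy $L\simeq\splus^{2n}$ by the explicit permutation matrix, rather than citing \cite{TAA2021}*{Lemma 2.9} factor by factor.
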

\begin{proof}
Consider the commutative diagrams

\begin{minipage}{0.48\linewidth}
\begin{equation}\label{cd:cLL}
\begin{tikzcd}[row sep=large]
\fSp{m} \arrow[r,"c'"] \arrow[d,"L"] &  \fGL{2m} \arrow[d,"L"] \\
G \arrow[r,hookrightarrow] \arrow[d,"\Int_{P}"] &  \fGL{4mn} \arrow[d,"\Int_{P}"] \\
\fO{4mn} \arrow[r,"c''"] \arrow[u,leftarrow,"\iso"] & \fGL{4mn} \arrow[u,leftarrow,"\iso"]
\end{tikzcd}
\end{equation}
\end{minipage}
\begin{minipage}{0.48\linewidth}
\begin{equation}\label{cd:cRR}
\begin{tikzcd}[row sep=large]
\fSp{n} \arrow[r,"c'"] \arrow[d,"R"] &  \fGL{2n} \arrow[d,"R"] \\
G \arrow[r,hookrightarrow] \arrow[d,"\Int_{P}"] &  \fGL{4mn} \arrow[d,"\Int_{P}"] \\
\fO{4mn} \arrow[r,"c''"] \arrow[u,leftarrow,"\iso"] & \fGL{4mn} \arrow[u,leftarrow,"\iso"]
\end{tikzcd}
\end{equation}
\end{minipage}

\vspace{5mm}

From these we obtain the commutative squares below:

\begin{minipage}{0.48\linewidth}
\begin{equation}\label{cd:cL}
\begin{tikzcd}[row sep=large]
\pi_{i}\fSp{m} \arrow[r,"c'_{*}"] \arrow[d,"L_{*}"] &  \pi_{i}\fGL{2m} \arrow[d,"L_{*}"]\\
\pi_{i}\fO{4mn} \arrow[r,"c''_{*}"] & \pi_{i}\fGL{4mn}
\end{tikzcd}
\end{equation}
\end{minipage}
\begin{minipage}{0.48\linewidth}
\begin{equation}\label{cd:cR}
\begin{tikzcd}[row sep=large]
\pi_{i}\fSp{n} \arrow[r,"c'_{*}"] \arrow[d,"R_{*}"] &  \pi_{i}\fGL{2n} \arrow[d,"R_{*}"]\\
\pi_{i}\fO{4mn} \arrow[r,"c''_{*}"] & \pi_{i}\fGL{4mn}
\end{tikzcd}
\end{equation}
\end{minipage}

\vspace{5mm}

Observe that the map $R:\fGL{2n}\to \fGL{4mn}$ in diagram \eqref{cd:cRR} is equal to the $(2n)$-fold direct sum. Then for all $i\in \NN$, by \cite{TAA2021}*{Proposition 2.7},  $R_{*}:\pi_{i}\fGL{2n}\to \pi_{i}\fGL{4mn}$ is given by 
\[
R_{*}(y)=2m\esta(y) \quad \text{for} \quad y\in \pi_{i}\fGL{2n}.
\]

Moreover, by \cite{TAA2021}*{Lemma 2.9}, $R,L:\fGL{2m}\to \fGL{4mn}$ are pointed homotopic . Hence, $L_{*}=R_{*}:\pi_{i}\fGL{2m}\to \pi_{i}\fGL{4mn}$, which implies that 
\[
L_{*}(x)=2n\esta(x) \quad \text{for} \quad x\in \pi_{i}\fGL{2m}  \quad \text{and} \quad i\in \NN.
\]

Therefore, by the commutativity of squares \eqref{cd:cL} and \eqref{cd:cR}, we have that $L_{*}:\pi_{i}\fSp{m}\to \pi_{i}(\fO{4mn})$ and $R_{*}:\pi_{i}\fSp{n}\to \pi_{i}\fO{4mn}$ satisfy the following equalities
\[
(c''_{*}\circ L_{*})(x)=2n\esta(c'_{*}(x)) \quad \text{and} \quad (c''_{*}\circ R_{*})(y)=2m\esta(c'_{*}(y)).
\]
Thus, $(c''_{*}\circ L_{*})(x)=2nc'_{*}(x)$ and $(c''_{*}\circ R_{*})(y)=2mc'_{*}(y)$ for $x \in \pi_{i}\fSp{m}$, $y\in \pi_{i}\fSp{n}$ and $i<4m+2$. The result follows by equation \eqref{mixedproduct}.
\end{proof}

\begin{theorem} \label{T:cprima} (\cite[Theorem IV.5.16]{MiToToG1991})
Let $i$ be in the stable range for $\fSp{n}$. Then the induced homomorphism $c'_{*}:\pi_{i}\fSp{n} \to \pi_{i}\fGL{n}$
is an isomorphism if $i\equiv 3$ (mod 8) and is an isomorphism onto $2\pi_{i}\fGL{2n}$ if $i\equiv 7$ (mod 8). In all other cases, it is trivial because the source is torsion and the target is torsion-free.
\end{theorem}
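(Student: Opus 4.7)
The plan rests on Bott periodicity and the long exact homotopy sequence of the fibration of stable (colimit) groups $\mathrm{Sp} \xrightarrow{c'} \mathrm{U} \to \mathrm{U}/\mathrm{Sp}$, where $\mathrm{Sp} = \varinjlim_n \fSp{n}$, $\mathrm{U} = \varinjlim_n \U(n,\CC)$, and $\mathrm{U}/\mathrm{Sp}$ is the corresponding colimit of homogeneous spaces. Since the stabilizations commute with $c'$ and are highly connected in the stable range, all homotopy-group statements may be read off in the colimit and then transferred back to each fixed $n$. Moreover, the inclusion $\U(2n,\CC) \hookrightarrow \fGL{2n}$ is a homotopy equivalence (Gram-Schmidt/polar decomposition), so the target $\pi_i \fGL{2n}$ is identified throughout with $\pi_i \U(2n,\CC)$.

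First I handle the easy residues by comparing tables. By Bott periodicity for the unitary group, $\pi_i \mathrm{U}$ is $\ZZ$ for odd $i$ and $0$ for even $i$. Cross-referencing with the table of $\pi_i \fSp{n}$ in Subsection \ref{subsub:homotype}: for $i \equiv 0, 1, 2, 6 \pmod 8$ the source vanishes; for $i \equiv 4, 5 \pmod 8$ the source is the torsion group $\ZZ/2$ while the target is torsion-free ($0$ or $\ZZ$). In each of these residues $c'_*$ is forced to be trivial, matching the ``in all other cases'' clause of the statement.

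The substantial cases are $i \equiv 3 \pmod 8$ and $i \equiv 7 \pmod 8$, where both sides are infinite cyclic. The key input from Bott periodicity is the equivalence $\Omega(\mathrm{U}/\mathrm{Sp}) \simeq \ZZ \times \B\mathrm{Sp}$, which yields $\pi_i(\mathrm{U}/\mathrm{Sp}) \cong \pi_{i-2}\mathrm{Sp}$ for $i \ge 2$. Feeding this into the long exact sequence
\[
\pi_{i+1}(\mathrm{U}/\mathrm{Sp}) \to \pi_i \mathrm{Sp} \xrightarrow{c'_*} \pi_i \mathrm{U} \to \pi_i(\mathrm{U}/\mathrm{Sp}) \to \pi_{i-1} \mathrm{Sp},
\]
I read off the two cases as follows. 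For $i \equiv 3 \pmod 8$, the flanking terms $\pi_{i-1}\mathrm{Sp}$ and $\pi_{i-2}\mathrm{Sp}$ correspond to residues $2$ and $1 \pmod 8$, where $\pi_*\mathrm{Sp} = 0$; the sequence sandwiches $c'_*$ between two zeros, proving it is an isomorphism. For $i \equiv 7 \pmod 8$, one has $\pi_{i-1}\mathrm{Sp} = 0$ (residue $6 \pmod 8$) and $\pi_{i-2}\mathrm{Sp} \cong \ZZ/2$ (residue $5 \pmod 8$), so the sequence forces $c'_*$ to be injective with cokernel $\ZZ/2$, realizing an isomorphism onto the index-two subgroup $2\pi_i \mathrm{U} \cong 2\pi_i \fGL{2n}$.

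The main obstacle is the Bott periodicity equivalence $\Omega(\mathrm{U}/\mathrm{Sp}) \simeq \ZZ \times \B\mathrm{Sp}$ itself, one of the eight instances of the classical periodicity theorem, proved via Morse theory on path spaces of the unitary group (Bott's original argument) or via Clifford module representations (Atiyah-Bott-Shapiro). Accepting this as classical input, everything else is mechanical bookkeeping with the long exact sequence against the tables of Subsection \ref{subsub:homotype}; the factor of $2$ in the case $i \equiv 7 \pmod 8$ is precisely the appearance of the $\ZZ/2 \in \pi_5 \mathrm{Sp}$ one period earlier in the Bott cycle.
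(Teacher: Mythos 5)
The paper does not actually prove this statement; it is quoted directly from Mimura--Toda (Theorem IV.5.16), so there is no internal argument to compare against. Your proposal supplies the standard proof of that cited result, and the bookkeeping is correct. Granting the Bott equivalence $\Omega(\mathrm{U}/\mathrm{Sp})\simeq \ZZ\times\B\mathrm{Sp}$, so that $\pi_{i}(\mathrm{U}/\mathrm{Sp})\cong\pi_{i-2}\mathrm{Sp}$, the long exact sequence of $\mathrm{Sp}\to\mathrm{U}\to\mathrm{U}/\mathrm{Sp}$ traps $c'_{*}$ between $\pi_{i-1}\mathrm{Sp}$ and $\pi_{i-2}\mathrm{Sp}$; for $i\equiv 3\pmod 8$ these sit in residues $2$ and $1$ and vanish, giving an isomorphism, while for $i\equiv 7\pmod 8$ they are $0$ and $\ZZ/2$ (residues $6$ and $5$), giving an injection with cokernel $\ZZ/2$, i.e.\ an isomorphism onto the unique index-two subgroup $2\pi_{i}\fGL{2n}$ of $\pi_{i}\fGL{2n}\cong\ZZ$. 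The remaining residues are disposed of by comparing torsion against torsion-free, exactly as the statement asserts.

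Two small points you should make explicit. First, $c'$ is the inclusion of the \emph{complex} symplectic group into $\fGL{2n}$; to use the unitary fibration you should pass to maximal compact subgroups, $\U(n,\HH)\hookrightarrow\U(2n,\CC)$, which is the same identification the paper itself makes in the proof of Proposition~\ref{P:doubling}. Second, your transfer from the colimit back to fixed $n$ requires the unitary stabilization to be an isomorphism on $\pi_{i}$, which holds only for $i<4n$; at the top of the paper's declared stable range, $i=4n$ or $4n+1$, the target is unstable (for instance $\pi_{4n}\U(2n,\CC)\cong\ZZ/(2n)!$ is torsion, so the statement's own ``torsion-free target'' justification also fails there). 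Since $4n\equiv 0,4$ and $4n+1\equiv 1,5\pmod 8$, this affects neither of the two substantive clauses, and those are the only ones invoked later (Corollary~\ref{C:tpSpSp}), but your argument as written covers $i<4n$ and should either be restricted to that range or supplemented in those two boundary degrees.
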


\begin{corollary}\label{C:tpSpSp}
Let $m\leq n$ and $i<4m+2$, the homomorphism 
\begin{equation*}
\begin{tikzcd}
\stensor_{*}:\pi_{i}\fSp{m}\times\pi_{i}\fSp{n}  \arrow[r] & \pi_{i}\fO{4mn}
\end{tikzcd}
\end{equation*}
is given by
\[
\stensor_{*}(x,y)=%
\begin{cases}
nx+my & \text{if $i\equiv 3$ (mod 8)},\\
4(nx+my) & \text{if $i\equiv 7$ (mod 8)},\\
0 & \text{otherwise},\\
\end{cases}
\]
for $x \in \pi_{i}\fSp{m}$ and $y \in \pi_{i}\fSp{n}$.
\end{corollary}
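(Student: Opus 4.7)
The plan is to reduce the computation of $\stensor_{*}$ to that of $c''_{*}\circ\stensor_{*}$, which is already provided by Proposition~\ref{P:tpSpSp}, and then invert $c''_{*}$ case-by-case using the known stable homotopy of the groups involved. Most residues $i\bmod 8$ in the stable range for $\fSp{m}$ give a trivial answer for dimensional reasons: for $i\equiv 0,1,2,6 \pmod{8}$ the source $\pi_{i}\fSp{m}\times\pi_{i}\fSp{n}$ already vanishes by the homotopy table of Section~\ref{subsub:homotype}, and for $i\equiv 4,5 \pmod{8}$ the target $\pi_{i}\fO{4mn}$ vanishes by real Bott periodicity applied to the deformation retract $\Or(n,\RR)\simeq\fO{n}$. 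In each of these cases $\stensor_{*}(x,y)=0$ on the nose, which accounts for the ``otherwise'' clause.

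For the essential residues $i\equiv 3,7 \pmod{8}$, all of $\pi_{i}\fSp{m}$, $\pi_{i}\fSp{n}$, $\pi_{i}\fO{4mn}$, $\pi_{i}\fGL{2m}$, $\pi_{i}\fGL{2n}$, and $\pi_{i}\fGL{4mn}$ are isomorphic to $\mathbb{Z}$. Substituting Theorem~\ref{T:cprima} into Proposition~\ref{P:tpSpSp} gives
\[
c''_{*}(\stensor_{*}(x,y))=
\begin{cases}
2nx+2my & \text{if } i\equiv 3 \pmod{8},\\
4(nx+my) & \text{if } i\equiv 7 \pmod{8},
\end{cases}
\]
under the identification of $\pi_{i}\fSp{k}$ with its image in $\pi_{i}\fGL{2k}$ via $c'_{*}$ (which is an isomorphism when $i\equiv 3$ and multiplication by $2$ when $i\equiv 7$). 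To recover $\stensor_{*}(x,y)$ itself I need the orthogonal analogue of Theorem~\ref{T:cprima}: via the deformation retract $\Or(n,\RR)\simeq\fO{n}$, the map $c''_{*}\colon \pi_{i}\fO{4mn}\to\pi_{i}\fGL{4mn}$ is identified with the classical complexification map $\U(n,\CC)_{\RR}\colon\pi_{i}\Or\to\pi_{i}\U(n,\CC)$, which by \cite{MiToToG1991} (equivalently, by the behaviour of $KO\to KU$ on Bott-periodic coefficients) is multiplication by $2$ in degree $i\equiv 3 \pmod{8}$ and an isomorphism in degree $i\equiv 7 \pmod{8}$. Dividing by $2$ in the first case (which is unambiguous inside the torsion-free group $\mathbb{Z}$) and inverting in the second yields $\stensor_{*}(x,y)=nx+my$ and $\stensor_{*}(x,y)=4(nx+my)$ respectively.

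The main obstacle is not the arithmetic but the bookkeeping: one must keep straight the inclusion- and stabilization-induced identifications among the symplectic, orthogonal, and general-linear homotopy groups, and justify the orthogonal Bott-periodicity statement needed for $c''_{*}$ in degrees $3$ and $7 \pmod{8}$. Once those identifications are fixed, the corollary drops out of a single line of algebra in each nontrivial case.
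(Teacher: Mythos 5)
Your proposal is correct and follows essentially the same route as the paper's proof: both reduce to Proposition~\ref{P:tpSpSp}, use Theorem~\ref{T:cprima} to evaluate $c'_{*}$, and then invert $c''_{*}$ using the complexification behaviour from \cite{MiToToG1991} (isomorphism onto $2\pi_{i}\fGL{4mn}$ when $i\equiv 3$, isomorphism when $i\equiv 7$), with the remaining residues handled by vanishing of source or target. The only cosmetic difference is that the paper tracks $L_{*}$ and $R_{*}$ separately before recombining via equation \eqref{mixedproduct}, whereas you work with the composite directly.
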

\begin{proof}
Let $i<4m+2$ and $i\equiv 3,7$ (mod 8), then by Proposition \ref{P:tpSpSp}, diagrams \eqref{cd:cL} and \eqref{cd:cR} take the form

\begin{minipage}{0.48\linewidth}
\begin{equation*}
\begin{tikzcd}[row sep=large]
\pi_{i}\fSp{m} \arrow[r,"c'_{*}"] \arrow[d,"L_{*}"] &  \pi_{i}\fGL{2m} \arrow[d,"\times(2n)"]\\
\pi_{i}\fO{4mn} \arrow[r,"c''_{*}"] & \pi_{i}\fGL{4mn}
\end{tikzcd}
\end{equation*}
\end{minipage}
\begin{minipage}{0.48\linewidth}
\begin{equation*}
\begin{tikzcd}[row sep=large]
\pi_{i}\fSp{n} \arrow[r,"c'_{*}"] \arrow[d,"R_{*}"] &  \pi_{i}\fGL{2n} \arrow[d,"\times(2m)"]\\
\pi_{i}\fO{4mn} \arrow[r,"c''_{*}"] & \pi_{i}\fGL{4mn}.
\end{tikzcd}
\end{equation*}
\end{minipage}

\vspace{5mm}

By \cite{MiToToG1991}*{Theorem IV.5.12} and Theorem \ref{T:cprima},  if $i\equiv 3$ (mod 8), then $c'_{*}$ is an isomorphism and $c''_{*}$ is an isomorphism onto $2\pi_{i}\fGL{4mn}$. Therefore, $2L_{*}(x)=2nx$ and $2R_{*}(y)=2my$, i.e. $L_{*}(x)=nx$ and $R_{*}(x)=my$.

By \cite{MiToToG1991}*{Theorem IV.5.12} and Theorem \ref{T:cprima}, if $i\equiv 7$ (mod 8), then $c'_{*}$ is an isomorphism onto $2\pi_{i}\fGL{2l}$ for $l=m,n$ and $c''_{*}$ is an isomorphism. Therefore, $L_{*}(x)=4nx$ and $R_{*}(y)=4my$.

In all other cases, either the source or the target is trivial.
\end{proof}

\begin{corollary}\label{C:2tpSpSp}
Let $\stensor^{2}:\fSp{m} \to \fO{4m^{2}}$ be the map given by $\stensor^{2}(A) = A\stensor A$ for all $A \in \fSp{m}$. Then the homomorphism 
\begin{equation*}
\begin{tikzcd}
\stensor^{2}_{*}:\pi_{i}\fSp{m} \arrow[r] & \pi_{i}\fO{4m^{2}}
\end{tikzcd}
\end{equation*}
is given by
\[
\stensor^{2}_{*}(x)=%
\begin{cases}
2mx & \text{if $i\equiv 3$ (mod 8)},\\
8mx & \text{if $i\equiv 7$ (mod 8)},\\
0 & \text{otherwise},\\
\end{cases}
\]
for $x \in \pi_{i}\fSp{m}$ and all $i<4m+2$.
\end{corollary}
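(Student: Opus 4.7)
The approach is to recognize that $\stensor^{2}$ factors through the diagonal, reducing the statement to a direct application of Corollary~\ref{C:tpSpSp}. Specifically, $\stensor^{2}$ fits into the composite
\[
\stensor^{2} : \fSp{m} \xrightarrow{\Delta} \fSp{m} \times \fSp{m} \xrightarrow{\stensor} \fO{4m^{2}},
\]
where $\Delta$ is the diagonal map, since $(\stensor \circ \Delta)(A) = A \stensor A = \stensor^{2}(A)$ by definition.

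On homotopy groups, the diagonal induces $\Delta_{*}(x) = (x,x)$, so $\stensor^{2}_{*}(x) = \stensor_{*}(x,x)$ for every $x \in \pi_{i}\fSp{m}$. Applying Corollary~\ref{C:tpSpSp} with $n = m$ (which is permitted since $m \leq m$ and the hypothesis $i < 4m+2$ is precisely ours), we immediately obtain
\[
\stensor^{2}_{*}(x) =
\begin{cases}
mx + mx = 2mx & \text{if } i \equiv 3 \pmod 8,\\
4(mx + mx) = 8mx & \text{if } i \equiv 7 \pmod 8,\\
0 & \text{otherwise}.
\end{cases}
\]

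Since the body of the argument is a one-line substitution, there is no real obstacle; the only thing to verify carefully is that $\stensor^{2}$ as defined agrees with the composite $\stensor \circ \Delta$, which is immediate from the formula $\stensor^{2}(A) = A \stensor A$. The result then follows without further work.
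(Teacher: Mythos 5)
Your proof is correct and matches the paper's (implicit) argument: the paper states this as an immediate corollary of Corollary~\ref{C:tpSpSp}, and the factorization $\stensor^{2}=\stensor\circ\Delta$ together with $\Delta_{*}(x)=(x,x)$ and the case $n=m$ is exactly the intended substitution.
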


\subsection{Tensor product on the quotient in the stable range}
We describe the effect of the tensor product operation on the homotopy groups of the projective complex symplectic group. The methods we use in \cite{TAA2021} and \cite{TAAwOI2022} to establish the decomposition of topological Azumaya algebras apply to those whose degrees are relatively prime. For this reason, we only study the tensor product \eqref{qSp} when $n$ is odd.

The tensor product operation $\tensor:\fSp{m}\times\fSO{n} \to \fSp{mn}$ sends $Z(\fSp{m})\times\{I_{n}\}$ to $Z(\fSp{mn})$. As a consequence, the operation descends to the quotient 
\begin{equation}\label{qSp}
\begin{tikzcd}
\tensor:\fPSp{m}\times\fSO{n} \arrow[r] & \fPSp{mn}.
\end{tikzcd}
\end{equation}

\begin{proposition}\label{P:tppiiSp}
Let $i<4m+2$. The homomorphism 
\begin{equation*}
\begin{tikzcd}
\tensor_{*}:\pi_{i}\fPSp{m}\times\pi_{i}\fSO{n} \arrow[r] & \pi_{i}\fPSp{mn}
\end{tikzcd}
\end{equation*}
is given by $\tensor_{*}(x,y)=nx+2my$ for $x\in \pi_{i}\fPSp{m}$ and $y \in \pi_{i}\fSO{n}$.
\end{proposition}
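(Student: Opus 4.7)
My strategy is to transfer the computation of Proposition \ref{P:tpSpO} (established for $\fSp{m}\times\fO{n}\to\fSp{mn}$) to the quotient setting by naturality with respect to the quotient map $p:\fSp{k}\to\fPSp{k}$ and the inclusion $j:\fSO{n}\hookrightarrow\fO{n}$. The key tool is the commutative square arising from the descent of the tensor product:
\[
\begin{tikzcd}[row sep=large]
\fSp{m}\times\fSO{n} \arrow[r,"\tensor"] \arrow[d,"p\times \id"] & \fSp{mn} \arrow[d,"p"] \\
\fPSp{m}\times\fSO{n} \arrow[r,"\tensor"] & \fPSp{mn}.
\end{tikzcd}
\]

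For $2\leq i < 4m+2$, I would apply $\pi_{i}$ to this square. By Subsection \ref{subsub:homotype}, both vertical maps induce isomorphisms on $\pi_{i}$, and $j_{*}:\pi_{i}\fSO{n}\xrightarrow{\iso}\pi_{i}\fO{n}$ is an isomorphism for $i\geq 1$ since $\fSO{n}$ is the identity component of $\fO{n}$. Under these identifications the bottom row is computed by the top row, and Proposition \ref{P:tpSpO} gives $\tensor_{*}(x,y)=nx+2my$ exactly as claimed.

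The main obstacle is the case $i=1$, since $p_{*}:\pi_{1}\fSp{k}\to\pi_{1}\fPSp{k}$ is no longer an isomorphism (the source is $0$, the target is $\ZZ/2$). Here I would argue directly. Writing $\tensor_{*}(x,y)=L_{*}(x)+R_{*}(y)$ where $L([A])=[A\tensor I_{n}]$ and $R(B)=[I_{2m}\tensor B]$, observe first that the lift $\fSp{m}\to\fSp{mn}$, $A\mapsto A\tensor I_{n}$, sends $-I_{2m}$ to $-I_{2mn}$. Thus any path in $\fSp{m}$ from $I_{2m}$ to $-I_{2m}$ (representing the generator of $\pi_{1}\fPSp{m}$) is mapped to a path from $I_{2mn}$ to $-I_{2mn}$ (representing the generator of $\pi_{1}\fPSp{mn}$), so $L_{*}$ is an isomorphism on $\pi_{1}$. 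On the other hand, $R$ factors as $\fSO{n}\xrightarrow{I_{2m}\tensor(-)}\fSp{mn}\xrightarrow{p}\fPSp{mn}$; since $\pi_{1}\fSp{mn}=0$, we get $R_{*}=0$. Consequently $\tensor_{*}(x,y)=x$ on $\pi_{1}$. Because $n$ is odd in the standing assumption of this subsection, the formula $nx+2my$ simplifies to $x+0=x$ in $\ZZ/2$, confirming agreement. The case $i=0$ is trivial since all groups involved are path-connected.

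Beyond the $i=1$ case, the argument is essentially naturality; the $i=1$ case is delicate because it depends critically on the parity of $n$. For even $n$ the same computation would give $nx+2my=0\neq x$, which is precisely why the assumption that $n$ be odd is indispensable and why the authors restrict to this case for the descent.
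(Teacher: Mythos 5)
Your proof is correct, and for $i\geq 2$ it is essentially the paper's argument: the paper encodes your naturality square as a map of fibrations with fibre $Z(\fSp{m})\times\{I_{n}\}$ and passes to the induced map of long exact sequences, obtaining exactly your commutative square of homotopy groups with isomorphisms on the horizontal arrows, and then quotes Proposition \ref{P:tpSpO}. Where you genuinely diverge is at $i=1$. The paper's proof simply asserts that $\tensor_{*}$ is trivial for $i=0,1$; for $i=1$ this is inconsistent with the stated formula, since $nx+2my=x$ in $\ZZ/2$ when $n$ is odd, and it is your value (not triviality) that is used later in Proposition \ref{P:JOSpi}, where $J_{2}$ is represented by a matrix whose first row is $(1,0)$. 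Your covering-space computation --- the lift $A\mapsto A\tensor I_{n}$ carries $-I_{2m}$ to $-I_{2mn}$, so $L_{*}$ is an isomorphism on $\pi_{1}$, while $R$ factors through the simply connected group $\fSp{mn}$ and hence vanishes on $\pi_{1}$ --- is the correct resolution, and your observation that the conclusion fails for even $n$ correctly isolates why the subsection restricts to odd $n$. One caveat you inherit from the paper rather than introduce: Proposition \ref{P:tpSpO} carries the additional hypothesis $i<n-1$ (needed to control $d_{*}$ in the stable range for $\fO{n}$), which is silently dropped from the statement of Proposition \ref{P:tppiiSp}; your appeal to it is therefore subject to the same implicit restriction.
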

\begin{proof}
If $i=0,1$, then $\tensor_{*}$ is trivial. Let $i>1$. There exits a map of fibrations
\begin{equation}\label{mfSp}
\begin{tikzcd}
Z(\fSp{m})\times\{I_{n}\} \arrow[r,hookrightarrow] \arrow[d,"\tensor"] & \fSp{m}\times\fSO{n} \arrow[r,twoheadrightarrow] \arrow[d,"\tensor"] & \fPSp{m}\times\fSO{n} \arrow[d,"\tensor"]\\
Z(\fSp{mn}) \arrow[r,hookrightarrow] & \fSp{mn} \arrow[r,twoheadrightarrow] & \fPSp{mn}.
\end{tikzcd}
\end{equation}
Then there exits a homomorphism between the long exact sequences associated to the fibrations in diagram \eqref{mfSp}. For $i>1$ we obtain a commutative square
\begin{equation*}
\begin{tikzcd}
\pi_{i}\fSp{m}\times\pi_{i}\fSO{n}\arrow[r,"\iso"] \arrow[d,"\otimes_{*}"] & \pi_{i}\fPSp{m}\times\pi_{i}\fSO{n} \arrow[d,"\otimes_{*}"]\\
\pi_{i}\fSp{mn} \arrow[r,"\iso"] & \pi_{i}\fPSp{mn}.
\end{tikzcd}
\end{equation*}
From this diagram and Proposition \ref{P:tpSpO} we have that for all $1<i<4m+2$, $\tensor_{*}(x,y)=nx+2my$ where $x\in \pi_{i}\fPSp{m}$ and $y \in \pi_{i}\fSO{n}$.
\end{proof}

To simplify notation, in the following sections we write $\Sp(n)$ instead of $\fSp{n}$ and similarly $\SO(n)$ instead of $\fSO{n}$, except in subsection titles.

\section{Decompostion of topological Azumaya algebras with symplectic involution} \label{sec:mainproof}

This section is mainly devoted to prove Theorem \ref{mainSp}. Additionally, we show that the map $f_{\tensor}: \B\PSp(m)\times\B\SO(n) \rightarrow \B\PSp(mn)$ defined below generally does not admit a section, and we provide examples of topological Azumaya algebras with symplectic involution for which no tensor product decomposition is possible when the dimension of the base space is above the stable range for the symplectic group.

Let $m$ and $n$ be positive integers such that $n$ is odd. By applying the classifying-space functor to the homomorphism \eqref{qSp} we obtain the following map
\begin{equation*}
\begin{tikzcd}
f_{\tensor}: \B\PSp(m)\times\B\SO(n) \arrow[r] & \B\PSp(mn).
\end{tikzcd}
\end{equation*}

Theorem \ref{mainSp} results as a corollary of Theorem \ref{T:BSp2mn}. The proof strategy for Theorem \ref{T:BSp2mn} relies on constructing a map $J: \B\PSp(m)\times \B\SO(n) \to \B\PSp(mn)\times \B\SO(N)$ which is $7$-connected and fits into the following homotopy-commutative diagram:

\begin{equation*}
\begin{tikzcd}[execute at begin picture={\useasboundingbox (-4.5,-1) rectangle (4.5,1);},row sep=large,column sep=huge]
& \B\PSp(m)\times \B\SO(n) \arrow[d,"f_{\tensor}"] \arrow[r,"J"] \arrow[d,"f_{\tensor}"] & \B\PSp(mn)\times \B\SO(N) \\
 X \arrow[r,"\cA"] \arrow[ur,dotted,"\cA_{2m}\times\cA_{n}",bend left=20]  & \B\PSp(mn). \arrow[ur,leftarrow,"\proj_{1}"] &
\end{tikzcd}
\end{equation*}

Here, $\proj_{1}$ denotes projection onto the first coordinate. Given the dimensional restriction $\dim(X)\leq 7$ and the connectivity of $J$, we apply the Whitehead theorem to conclude the existence of the required lift.

\subsection{A \texorpdfstring{$7$}{7}-connected map}
We construct the map $J$ here by first defining an intermediate map $\widetilde{T}$ in Proposition \ref{P:TtildaSp}, which is central to establishing the appropriate connectivity properties for $J$. We prove that the resulting map $J$ is $7$-connected.

\begin{proposition}\label{P:TtildaSp}
Let $m$ and $n$ be relatively prime positive integers such that $n$ is odd. Let $d$ denote $\min\{4m+2,n-1\}$. There exists a homomorphism $\widetilde{\Tr}:\PSp(m)\times \SO(n) \rightarrow \SO(N)$, for some positive integer $N$, such that for all $i<d$ the homomorphisms induced on homotopy groups 
\begin{equation*}
\begin{tikzcd}
\widetilde{\Tr}_{i}:\pi_{i}\PSp(m)\times \pi_{i}\SO(n) \arrow[r] & \pi_{i}\SO(N)
\end{tikzcd}
\end{equation*}
are given by the following expressions, where $u$ and $v$ are some positive integers, independent of $i$, for which $\bigl|vn-4um^{2}\bigr|=1$.
\begin{enumerate}
\item If $i=1$, then $\widetilde{\Tr}_{1}(x,y)=zx+y$ for $x,y \in \ZZ/2$ and some $z\in \ZZ/2$.
\item If $1<i<d$, then for all $x \in \pi_{i}\PSp(m)$ and $y \in \pi_{i}\SO(n)$ we have that
\[
\widetilde{\Tr}_{i}(x,y)=%
\begin{cases}
vy & \text{if $i\equiv 0, 1$ (mod 8)},\\
2umx+vy & \text{if $i\equiv 3$ (mod 8)},\\
8umx+vy & \text{if $i\equiv 7$ (mod 8)},\\
0 & \text{otherwise}.
\end{cases}
\]

\end{enumerate}
\end{proposition}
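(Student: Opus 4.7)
The plan is to define $\widetilde{\Tr}$ as a block-diagonal combination of $u$ copies of the double tensor product $\stensor^{2}$ on the $\PSp(m)$ factor and $v$ copies of the standard inclusion on the $\SO(n)$ factor, where the multiplicities $u,v$ come from Bezout's identity so that $|vn-4um^{2}|=1$.

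First, since $n$ is odd and $\gcd(m,n)=1$, we have $\gcd(n,4m^{2})=1$, so Bezout yields positive integers $u,v$ (any initial pair can be adjusted by adding $(kn,4km^{2})$ for $k\gg 0$ to achieve positivity and to make $u,v$ arbitrarily large) with $vn-4um^{2}=\pm 1$; note that this forces $v$ to be odd. Next, the map $\stensor^{2}:\Sp(m)\to\Or(4m^{2})$, $A\mapsto A\stensor A$, satisfies $\stensor^{2}(-I_{2m})=(-I)\stensor(-I)=I_{4m^{2}}$ and hence descends to $\PSp(m)\to\Or(4m^{2})$; since $\det(A\tensor A)=\det(A)^{4m}=1$ for $A\in\Sp(m)$, the image lies in $\SO(4m^{2})$. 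Setting $N=4um^{2}+vn$, I would define
$$
\widetilde{\Tr}(A,B)=\diag\bigl(\stensor^{2}(A),\dots,\stensor^{2}(A),\,B,\dots,B\bigr)\in\SO(N),
$$
with $u$ copies of $\stensor^{2}(A)$ followed by $v$ copies of $B$. This is a continuous group homomorphism $\PSp(m)\times\SO(n)\to\SO(N)$.

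For $1<i<d$, the stabilization inclusions $\SO(4um^{2})\hookrightarrow\SO(N)$ and $\SO(vn)\hookrightarrow\SO(N)$ are isomorphisms on $\pi_{i}$ (enlarging $u,v$ as noted, so that both exceed the connectivity threshold for this range), $\pi_{i}\PSp(m)\cong\pi_{i}\Sp(m)$, the direct sum corresponds to addition on $\pi_{i}$, and the $r$-fold direct sum acts as multiplication by $r$ (Corollary \ref{C:rdsSp} and its orthogonal analog). Combined with Corollary \ref{C:2tpSpSp}, this gives
$$
\widetilde{\Tr}_{i}(x,y)=u\cdot\stensor^{2}_{*}(x)+v\cdot y,
$$
which unpacks into the stated residue cases: in residues $3$ and $7$ mod $8$, Corollary \ref{C:2tpSpSp} gives $\stensor^{2}_{*}(x)=2mx$ and $8mx$, yielding $2umx+vy$ and $8umx+vy$; in residues $0$ and $1$ mod $8$, $\pi_{i}\Sp(m)=0$, leaving $vy$; in the residues $2,4,5,6$, either $\pi_{i}\SO(n)=0$ or the $\ZZ/2$-valued class $\stensor^{2}_{*}(x)$ is killed by the even multiplier $2um$, giving $0$.

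The case $i=1$ requires a separate remark, since Corollary \ref{C:2tpSpSp} supplies no information: its source $\pi_{1}\Sp(m)$ is trivial, whereas $\pi_{1}\PSp(m)=\ZZ/2$. However, the restriction of $\widetilde{\Tr}$ to the first factor is a continuous homomorphism $\PSp(m)\to\SO(N)$ and so induces \emph{some} $z\in\ZZ/2$ on $\pi_{1}$, while the restriction to the second factor is the $v$-fold direct sum; since $v$ is odd, this is the identity on $\pi_{1}\SO(n)=\ZZ/2$. Thus $\widetilde{\Tr}_{1}(x,y)=zx+y$, as required. The main obstacle is precisely this $i=1$ analysis, as one cannot determine $z$ explicitly from the tools of Section \ref{sec:stabilization}; fortunately, the proposition asserts only the existence of such a $z$, so no further work is needed. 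A subsidiary technical point is the stability range, which is handled by the freedom to enlarge $u$ and $v$ through the Bezout adjustment above.
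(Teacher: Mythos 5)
Your proposal is correct and follows essentially the same route as the paper: the same Bezout choice of $u,v$, the same map (the paper writes your block-diagonal $\diag(\stensor^{2}(A)^{\oplus u}, B^{\oplus v})$ as the composite $\oplus\circ(\oplus^{u},\oplus^{v})\circ(\stensor^{2},\id)$ on $\Sp(m)\times\SO(n)$ and then observes it kills $(\pm I_{2m}, I_{n})$ and descends), the same computation via Corollary \ref{C:2tpSpSp} and additivity of direct sums for $1<i<d$, and the same parity-of-$v$ argument isolating the undetermined $z$ at $i=1$. No gaps.
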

\begin{proof}
Without loss of generality suppose $4m+2<n-1$. Since $\gcd(m,n)=1$, there exist positive integers $u$ and $v$ satisfying the equation: $vn-4um^{2}=\pm1$. 

Set $N \coloneq 4um^{2}+vn$, and define $\Tr$ as the composite map:
\begin{equation*}
\begin{tikzcd}[column sep=large]
\Sp(m)\times \SO(n) \arrow[d,"(\stensor^{2}\text{,}\id)"]\\
\SO(4m^{2})\times \SO(n) \arrow[d,"(\oplus^{u}\text{,}\oplus^{v})"]\\
\SO(4um^{2})\times \SO(vn) \arrow[d,"\oplus"]\\
\SO(N).
\end{tikzcd}
\end{equation*}

Observe that the element $\bigl(\pm I_{m}, I_{n}\bigr)$ is sent to $\bigl(I_{4m^{2}},I_{n}\bigr)$ by $(\stensor^{2},\id)$, and subsequently to the identity element in $\SO(N)$ by the defined composite $\Tr$. Consequently, the map $\Tr$ factors through $\PSp(m)\times \SO(n)$ as follows:
\begin{equation*}
\begin{tikzcd}[column sep=huge]
\Sp(m)\times \SO(n) \arrow[rd,bend left=20,"\Tr"] \arrow[d,twoheadrightarrow]  &  \\
\PSp(m)\times \SO(n) & \SO(N). \arrow[l,leftarrow,"\widetilde{\Tr}"]
\end{tikzcd}
\end{equation*}

From Corollaries \ref{C:dsSp} and \ref{C:2tpSpSp} we have that for $i<4m+2$
\[
\Tr_{i}(x,y)=%
\begin{cases}
2umx+vy & \text{if $i\equiv 3$ (mod 8)},\\
8umx+vy & \text{if $i\equiv 7$ (mod 8)},\\
vy & \text{otherwise}.\\
\end{cases}
\]

The map of fibrations
\begin{equation*}
\begin{tikzcd}
Z(\Sp(m))\times \{I_{n}\} \arrow[r,hookrightarrow] \arrow[d] & \Sp(m)\times \SO(n) \arrow[r,twoheadrightarrow] \arrow[d,"\Tr"] & \PSp(m)\times \SO(n) \arrow[d,"\widetilde{\Tr}"]\\
\{I_{N}\} \arrow[r] & \SO(N) \arrow[r,equal] & \SO(N)
\end{tikzcd}
\end{equation*}
induces a commutative diagram
\begin{equation*}
\begin{tikzcd}
\pi_{i}\Sp(m)\times\pi_{i}\SO(n) \arrow[r,"\iso"] \arrow[d,"\Tr_{i}"] & \pi_{i}\PSp(m)\times\pi_{i}\SO(n) \arrow[d,"\widetilde{\Tr}_{i}"]\\
\pi_{i}\SO(N) \arrow[r,equal] & \pi_{i}\SO(N)
\end{tikzcd}
\end{equation*}
for $i>1$. Then $\widetilde{\Tr}_{i}(x,y)=\Tr_{i}(x,y)$ for $1<i<4m+2$.

For $i=1$, the map of fibrations induces the commutative diagram below
\begin{equation*}
\begin{tikzcd}
\pi_{1}\Sp(m)\times\pi_{1}\SO(n) \arrow[r,hookrightarrow] \arrow[d,"\Tr_{1}"] & \pi_{1}\PSp(m)\times\pi_{1}\SO(n) \arrow[d,"\widetilde{\Tr}_{1}"]\\
\pi_{1}\SO(N) \arrow[r,equal] & \pi_{1}\SO(N),
\end{tikzcd}
\end{equation*}
which takes the form
\begin{equation}\label{cd:TTtildeOSp}
\begin{tikzcd}
0\times \ZZ/2 \arrow[r,hookrightarrow] &  \ZZ/2 \times\ZZ/2  \arrow[d,"\widetilde{\Tr}_{1}"]\\
 & \ZZ/2. \arrow[ul,leftarrow,"\Tr_{1}"]
\end{tikzcd}
\end{equation}
Observe that the equality $vn-4um^{2}=\pm1$ implies $v$ is odd, then $\Tr_{1}(0,\beta)=\beta$, i.e. $\Tr_{1}$ is the identity. Thus, $\widetilde{\Tr}_{1}(0,1)=1$. Let $z$ denote $\widetilde{\Tr}_{1}(1,0)$. Then $\widetilde{\Tr}_{1}(x,y)=zx+y$.
\end{proof}

In what follows, $N$ is defined as in Proposition \ref{P:TtildaSp}. Let $J$ denote the map
\begin{equation}\label{JSpO}
\begin{tikzcd}[row sep=tiny]
J:\B\PSp(m)\times\B\SO(n) \arrow[r] & \B\PSp(mn)\times\B\SO(N)\\
(x,y) \arrow[r,mapsto] & \Bigl(f_{\tensor}(x,y)\text{,}\B\widetilde{\Tr}(x,y)\Bigr).
\end{tikzcd}
\end{equation}

Let $J_{i}$ denote the homomorphism induced by $J$ on homotopy groups:
\begin{equation}\label{JiOSp}
\begin{tikzcd}
J_{i}: \pi_{i}\B\PSp(m)\times \pi_{i}\B\SO(n) \arrow[r] & \pi_{i}\B\PSp(mn)\times\pi_{i}\B\SO(N)
\end{tikzcd}
\end{equation}

\begin{proposition}\label{P:JOSpi}
Let $m$ and $n$ be relatively prime positive integers such that $n$ is odd, and let $d$ denote $\min\{4m+3,n\}$. Then the homomorphism $J_{i}$ is an isomorphism if $0<i<d$ and $i\not \equiv 0$ (mod 8).
\end{proposition}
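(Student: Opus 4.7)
The plan is to use the identification $\pi_i \B G \cong \pi_{i-1} G$ to rewrite $J_i$ as a homomorphism
\[
\pi_j \PSp(m) \times \pi_j \SO(n) \longrightarrow \pi_j \PSp(mn) \times \pi_j \SO(N), \qquad j := i-1,
\]
and then check invertibility case by case on the residue of $j$ modulo $8$. By Propositions~\ref{P:tppiiSp} and~\ref{P:TtildaSp}, this homomorphism sends $(x,y)$ to $\bigl(nx + 2my,\, \widetilde{\Tr}_{j}(x,y)\bigr)$. The hypothesis $0 < i < \min\{4m+3,n\}$ places $j$ in the range $0 \leq j < \min\{4m+2, n-1\}$, which is the common stable range of Propositions~\ref{P:tppiiSp} and~\ref{P:TtildaSp}; in this range the homotopy groups of $\PSp$ and $\SO$ take their Bott-periodic values as recorded in Subsection~\ref{subsub:homotype}.

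The generic case is $j \equiv 3 \pmod 8$. Then source and target are both $\ZZ \oplus \ZZ$, and $J_i$ is represented by the integer matrix
\[
\begin{pmatrix} n & 2m \\ 2um & v \end{pmatrix},
\]
whose determinant equals $nv - 4um^2 = \pm 1$ by the choice of $u$ and $v$ in Proposition~\ref{P:TtildaSp}; this is the crucial arithmetic step. In the residues $j \equiv 2, 6 \pmod 8$ all four groups vanish. In $j \equiv 4, 5 \pmod 8$ source and target reduce to $\ZZ/2 \oplus 0$, and since $n$ is odd, multiplication by $n$ is the identity on $\ZZ/2$. In $j \equiv 0, 1 \pmod 8$ with $j \geq 8$ source and target reduce to $0 \oplus \ZZ/2$; the identity $|vn - 4um^2| = 1$ together with $n$ odd forces $v$ odd, so multiplication by $v$ is the identity on $\ZZ/2$. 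The low-degree cases $j = 0$ and $j = 1$ are handled separately: $j=0$ is trivial, while for $j=1$ both sides are $\ZZ/2 \oplus \ZZ/2$, and because $n$ is odd and $2m \equiv 0 \pmod 2$, the matrix of $J_2$ is the unitriangular $\bigl(\begin{smallmatrix} 1 & 0 \\ z & 1 \end{smallmatrix}\bigr)$, which is invertible over $\ZZ/2$.

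The main obstacle is bookkeeping, specifically tracking which of the four homotopy groups vanish in each Bott residue and then assembling the $2 \times 2$ matrix representation of $J_i$ from the formulas of Propositions~\ref{P:tppiiSp} and~\ref{P:TtildaSp}. The hypothesis $i \not\equiv 0 \pmod 8$ (equivalently $j \not\equiv 7 \pmod 8$) is essential: in that excluded residue the matrix would be $\bigl(\begin{smallmatrix} n & 2m \\ 8um & v \end{smallmatrix}\bigr)$ with determinant $nv - 16um^2$, which is not in general $\pm 1$, and so the method just sketched breaks down.
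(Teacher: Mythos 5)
Your proposal is correct and takes essentially the same route as the paper's proof: a residue-by-residue check using Propositions \ref{P:tppiiSp} and \ref{P:TtildaSp}, with the same matrices (including $\bigl(\begin{smallmatrix}1&0\\z&1\end{smallmatrix}\bigr)$ in degree $2$ and $\bigl(\begin{smallmatrix}n&2m\\2um&v\end{smallmatrix}\bigr)$ with determinant $nv-4um^{2}=\pm1$) and the same parity observations for the torsion cases. The only differences are cosmetic: you index by $j=i-1$ rather than $i$, and you fold the paper's separately treated boundary cases $i=4m+1,\,4m+2$ into the general residue analysis, which is legitimate because the values of $\pi_{4m}\fSp{m}$ and $\pi_{4m+1}\fSp{m}$ recorded in Subsection \ref{subsub:homotype} agree with the Bott-periodic pattern for the corresponding residues.
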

\begin{proof}
Without loss of generality we can assume $4m+3<n$. Let $0<i<4m+1$ and $i\not\equiv 0$ (mod 8). 

Observe that $J_{i}$ is trivial for $i=0,1$ and, $i\equiv 3,7$ (mod 8) with $i>2$. We then proceed by considering the cases $i=2$ and, $i\equiv 1,2,4,5,6$ (mod 8) with $i>2$.

\begin{enumerate}
\item Let $i=2$. By Propositions \ref{P:tppiiSp} and \ref{P:TtildaSp} the homomorphism $J_{2}:\ZZ/2\times\ZZ/2 \to \ZZ/2 \times\ZZ/2$ is represented by the invertible matrix
\[
\begin{pmatrix}
1 & 0\\
z & 1
\end{pmatrix}.
\]

\item Let $2<i<4m+1$.

\begin{itemize}
\item Let $i\equiv4$ (mod 8). By Propositions \ref{P:tppiiSp} and \ref{P:TtildaSp} the homomorphism $J_{i}:\ZZ\times \ZZ \to \ZZ \times\ZZ$ is represented by the invertible matrix
\[
\begin{pmatrix}
n & 2m\\
2um & v
\end{pmatrix}.
\]

\item Let $i\equiv1,2$ (mod 8). By Propositions \ref{P:tppiiSp} and \ref{P:TtildaSp} the homomorphism $J_{i}:0\times \ZZ/2 \to 0\times \ZZ/2$ is given by $J_{i}(0,y)=(2my,vy)=(0,y)$.

\item Let $i\equiv5,6$ (mod 8). By Propositions \ref{P:tppiiSp} and \ref{P:TtildaSp} the homomorphism $J_{i}:\ZZ/2\times 0 \to \ZZ/2\times 0$ is given by $J_{i}(x,0)=(nx,2umx)=(x,0)$.
\end{itemize}

\item Let $m$ be even, then $4m+1\equiv 1$ and $4m+2\equiv 2$ (mod 8). Thus, $J_{4m+1}$ and $J_{4m+2}$ take the form $0\times \ZZ/2 \to 0\times \ZZ/2$. As in the case $i\equiv 1,2$, we have $J_{4m+1}(0,y)=J_{4m+2}(0,y)=(0,y)$.

\item Let $m$ be odd, then $4m+1\equiv 5$ and $4m+2\equiv 6$ (mod 8). Thus, $J_{4m+1}$ and $J_{4m+2}$ take the form $\ZZ/2\times 0 \to \ZZ/2\times 0$. As in the case $i\equiv 5,6$, we have $J_{4m+1}(x,0)=J_{4m+2}(x,0)=(x,0)$.

\end{enumerate}
\end{proof}

As a corollary of Propositions \ref{P:JOSpi} we obtain the result below.

\begin{corollary}\label{C:J}
Let $m$ and $n$ be relatively prime positive integers such that $m>1$, $n>7$, and $n$ is odd. Then the map $J$ is $7$-connected.
\end{corollary}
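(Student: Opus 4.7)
The plan is a direct application of Proposition \ref{P:JOSpi}, where the only real work lies in verifying that the hypotheses $m > 1$ and $n > 7$ odd place the integers $1, 2, \ldots, 7$ strictly inside the range in which $J_i$ has already been shown to be an isomorphism. With $m > 1$ and $n$ odd and greater than $7$ one has $n \geq 9$ and $4m+3 \geq 11$, so $d := \min\{4m+3, n\} \geq 9$. Consequently, every integer $i$ with $1 \leq i \leq 7$ satisfies $0 < i < d$.

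Next, I would observe that no integer $i$ in the range $1 \leq i \leq 7$ is congruent to $0 \pmod 8$, so Proposition \ref{P:JOSpi} applies to each such $i$ and yields that $J_i$ is an isomorphism. Moreover, both $\B\PSp(m) \times \B\SO(n)$ and $\B\PSp(mn) \times \B\SO(N)$ are connected (as classifying spaces of connected topological groups), so $J_0$ is trivially an isomorphism as well.

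Following the convention used earlier in the paper --- where a map is said to be $k$-connected if it induces isomorphisms on $\pi_i$ for $i < k$ and an epimorphism on $\pi_k$ (cf.\ the discussion of the $(4n+2)$-connectivity of the standard inclusion in Subsection \ref{subsub:homotype}) --- the fact that $J_i$ is an isomorphism for every $i \leq 7$ immediately yields that $J$ is $7$-connected. There is essentially no obstacle here beyond the numerological bookkeeping just described; the substantive content was established in Proposition \ref{P:JOSpi}, and this corollary is just the specialization of that result to the dimensional range relevant for Theorem \ref{T:BSp2mn}.
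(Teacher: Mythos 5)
Your proposal is correct and matches the paper's (implicit) argument exactly: the paper presents this as an immediate corollary of Proposition \ref{P:JOSpi}, and your numerological check that $d=\min\{4m+3,n\}\geq 9>7$ and that no $i\in\{1,\dots,7\}$ is $\equiv 0\pmod 8$ is precisely the verification needed.
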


\subsection{Lifting through \texorpdfstring{$f_{\tensor}:\B\fPSp{m}\times\B\fSO{n} \;\rightarrow\; \B\fPSp{mn}$}{f:BPSp2m x BSOn --> BPSp2mn}}


In this subsection, we proceed to prove Theorem \ref{mainSp}. We also observe that if the map $f_{\tensor}: \B\PSp(m)\times\B\SO(n) \to \B\PSp(mn)$ had a section, one could find a lift $\cA:X\to \B\PSp(mn)$ factoring through $f_{\tensor}$. Propositon \ref{nosecsymp} establishes that, in general, $f_{\tensor}$ does not admit such section. Finally, we provide examples of topological Azumaya algebras equipped with symplectic involutions defined over spaces whose dimension is outside the stable range for the simplectic group, for which no tensor product decomposition exists.

\begin{theorem}\label{T:BSp2mn}
Let $X$ be a CW complex such that $\dim(X)\leq 7$. Let $m$ and $n$ be relatively prime positive integers such that $m>1$, $n>7$, and $n$ is odd. Every map $\cA:X \rightarrow \B\PSp(mn)$ can be lifted to $\B\PSp(m)\times \B\SO(n)$ along the map $f_{\tensor}$.
\end{theorem}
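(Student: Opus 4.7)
The plan is to reduce the lifting problem along $f_{\tensor}$---which has no reason to be highly connected---to a lifting problem along the auxiliary map $J:\B\PSp(m)\times \B\SO(n) \to \B\PSp(mn)\times \B\SO(N)$ of \eqref{JSpO}, which is seven-connected by Corollary~\ref{C:J}. The bridge is the tautological identity $\proj_{1}\circ J = f_{\tensor}$ built into the very definition of $J$.

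Given $\cA:X \to \B\PSp(mn)$, I begin by pairing it with a constant map $c:X \to \B\SO(N)$ at the basepoint to obtain
\[
\widehat{\cA} = (\cA,\, c) : X \longrightarrow \B\PSp(mn)\times \B\SO(N),
\]
so that $\proj_{1}\circ\widehat{\cA} = \cA$ on the nose. This costs nothing and moves the problem into the codomain of $J$.

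The central step is to lift $\widehat{\cA}$ along $J$. Replacing $J$ by an equivalent fibration with homotopy fibre $F$, the seven-connectivity from Corollary~\ref{C:J} yields $\pi_{k}(F)=0$ for every $k\leq 6$. Extending a lift inductively over the skeleta of $X$, the obstruction to passing from the $k$-skeleton to the $(k+1)$-skeleton lies in $H^{k+1}\bigl(X;\pi_{k}(F)\bigr)$. Since $\dim(X)\leq 7$, these groups can only be nonzero when $k+1\leq 7$, that is $k\leq 6$, and they all vanish by the connectivity of $F$. Consequently, there exists a lift $\cA_{2m}\times\cA_{n}:X \to \B\PSp(m)\times \B\SO(n)$ such that $J\circ(\cA_{2m}\times\cA_{n})\simeq \widehat{\cA}$.

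Finally, composing this homotopy with $\proj_{1}$ and applying $\proj_{1}\circ J = f_{\tensor}$ gives $f_{\tensor}\circ(\cA_{2m}\times\cA_{n})\simeq \cA$, which is the required lift. The substantive technical content has already been absorbed into establishing the seven-connectivity of $J$, via the tensor-product homotopy-group calculation of Proposition~\ref{P:tppiiSp}, the construction of the auxiliary map $\widetilde{\Tr}$ in Proposition~\ref{P:TtildaSp}, and the matrix identifications in Proposition~\ref{P:JOSpi}; once $J$ is seven-connected, this final step is a routine dimension count in obstruction theory, and the hypotheses $m>1$, $n>7$ odd, $\dim(X)\leq 7$ are precisely those that make that count work.
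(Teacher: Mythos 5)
Your proposal is correct and takes essentially the same route as the paper: both factor $f_{\tensor}$ as $\proj_{1}\circ J$ with $J$ the $7$-connected map of Corollary~\ref{C:J}, and both use $\dim(X)\leq 7$ to lift through $J$. The only cosmetic difference is that you run the skeleton-by-skeleton obstruction argument for lifting along $J$ directly, whereas the paper invokes the Whitehead theorem to get surjectivity of $J_{\#}$ on homotopy classes and composes $\cA$ with a section of $\proj_{1}$ --- which is exactly your pairing with a constant map.
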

\begin{proof}
Diagrammatically speaking, we want to find a map 
\begin{equation*}
\begin{tikzcd}
\cA_{2m}\times\cA_{n}:X \arrow[r] & \B\PSp(m)\times \B\SO(n)
\end{tikzcd}
\end{equation*}
such that diagram \eqref{cd:lpOSp} commutes up to homotopy
\begin{equation}\label{cd:lpOSp}
\begin{tikzcd}[execute at begin picture={\useasboundingbox (-4.5,-1) rectangle (4.5,1);},row sep=large,column sep=huge]
& \B\PSp(m)\times \B\SO(n) \arrow[d,"f_{\tensor}"] \\
 X \arrow[r,"\cA"] \arrow[ur,dotted,"\cA_{2m}\times\cA_{n}",bend left=20]  & \B\PSp(mn).
\end{tikzcd}
\end{equation}

Without loss of generality supposse $4m+2<n-1$. Corollary \ref{C:J} yields a map $J: \B\PSp(m)\times \B\SO(n) \rightarrow \B\SO(N)$ where $N$ is some positive integer so that $N\gg n-1>4m+2$. Observe that $f_{\tensor}$ factors through $\B\PSp(mn)\times \B\SO(N)$, so we can write $f_{\tensor}$ as the composite of $J$ and the projection $\proj_{1}$ shown in diagram \eqref{cd:JprojOSp}.

\begin{equation}\label{cd:JprojOSp}
\begin{tikzcd}[row sep=large,column sep=huge]
\B\PSp(m)\times \B\SO(n) \arrow[r,"J"] \arrow[d,"f_{\tensor}"] & \B\PSp(mn)\times \B\SO(N) \\
\B\PSp(mn) \arrow[ur,leftarrow,"\proj_{1}"]& 
\end{tikzcd}
\end{equation}

By the Whitehead theorem \cite[Corollary 7.6.23]{SpaAT2012}, since $J$ is $7$-connected and $\dim(X)\leq 7$, the map
\begin{equation*}
\begin{tikzcd}
J_{\#}:\bigl[X, \B\PSp(m)\times \B\SO(n)\bigr] \arrow[r] &  \bigl[X,\B\PSp(mn)\times \B\SO(N)\bigr]
\end{tikzcd}
\end{equation*}
is a surjection.

Let $s$ denote a section of $\proj_{1}$. The surjectivity of $J_{\#}$ implies $s\circ \cA$ has a preimage $\cA_{2m}\times\cA_{n}:X \rightarrow \B\PSp(2)\times \B\SO(n)$ such that $J\circ (\cA_{2m}\times\cA_{n})\simeq s\circ \cA$.

Commutativity of diagram \eqref{cd:lpOSp} follows from commutativity of diagram \eqref{cd:JprojOSp}. Thus, the result follows.
\end{proof}

\newpage
\begin{proposition}\label{nosecsymp}
Let $m$ and $n$ be positive integers such that $n$ is odd.
\begin{enumerate}
\item If $4m+4<n$ and $4m+4<4mn$, then $f_{\tensor}: \B\PSp(m)\times\B\SO(n) \to \B\PSp(mn)$ does not have a section.

\item If $n\in \{3,5,7\}$, $n<4m+3$ and $n<4mn$, then then $f_{\tensor}: \B\PSp(m)\times\B\SO(n) \to \B\PSp(mn)$ does not have a section.
\end{enumerate}
\end{proposition}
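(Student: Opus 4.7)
The plan is to show that $(f_{\tensor})_{*}$ fails to be surjective on $\pi_{4m+4}\B\PSp(mn)$, equivalently that $\tensor_{*}\colon \pi_{4m+3}\PSp(m)\times \pi_{4m+3}\SO(n)\to \pi_{4m+3}\PSp(mn)$ has image strictly smaller than the target. The degree $i=4m+3$ is chosen one step beyond the stable range for $\Sp(m)$, so that the $\PSp(m)$-coordinate becomes torsion while the target $\pi_{4m+3}\PSp(mn)\iso\pi_{4m+3}\Sp(mn)\iso\ZZ$ stays in the stable range (both hypotheses of the proposition ensure $4m+3<4mn$).

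First, I verify that $\pi_{4m+3}\Sp(m)$ is torsion. The long exact sequence of the Stiefel fibration $\Sp(m)\hookrightarrow \Sp(m+1)\to S^{4m+3}$, combined with the excerpt's formula for the first unstable group $\pi_{4m+2}\Sp(m)$, forces the connecting map $\ZZ=\pi_{4m+3}S^{4m+3}\to \pi_{4m+2}\Sp(m)$ to be surjective. Consequently the image of $\pi_{4m+3}\Sp(m)\to \pi_{4m+3}\Sp(m+1)\iso\ZZ$ is zero, and exactness identifies $\pi_{4m+3}\Sp(m)$ as a quotient of $\pi_{4m+4}S^{4m+3}=\ZZ/2$. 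Applying Proposition \ref{tensorSpO} (which has no stable-range restriction), $\tensor_{*}(x,y) = n\esta(x)+m\esta(d_{*}(y))$; the first term is zero because torsion maps trivially into the torsion-free group $\ZZ$, so the image of $\tensor_{*}$ lies in $m\cdot\esta(d_{*}(\pi_{4m+3}\SO(n))) \subseteq m\ZZ$.

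Part (1) follows at once: the hypothesis $4m+4<n$ places $i$ within the stable range of $\fO{n}$, so Proposition \ref{P:doubling} gives $d_{*}(\pi_{4m+3}\SO(n))=2\ZZ$, and the image of $\tensor_{*}$ is contained in $2m\ZZ\subsetneq\ZZ$. For Part (2), if $m\geq 2$ then the containment in $m\ZZ\subsetneq\ZZ$ already obstructs, so only $(m,n)\in\{(1,3),(1,5)\}$ remain. The case $(1,3)$ is straightforward: $\pi_{7}\SO(3)=\pi_{7}S^{3}$ is finite, hence $d_{*}=0$. The main obstacle is $(1,5)$: here $\SO(5)\iso \Sp(2)/\{\pm I\}$ gives $\pi_{7}\SO(5)\iso \ZZ$, so I must show directly that $d_{*}\colon\pi_{7}\SO(5)\to\pi_{7}\Sp(5)$ has image in $2\ZZ$. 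I plan to accomplish this by a Chern-class pullback: $d$ is classified by the doubled vector representation $V\oplus V$, for which $q_{2}(V\oplus V)=c_{4}(V_{\CC}\oplus V_{\CC})=p_{1}(V)^{2}+2p_{2}(V)$; since the Hurewicz generator of $\pi_{7}\SO(5)\iso\ZZ$ annihilates the decomposable class $p_{1}^{2}$ and pairs to $\pm 1$ with the indecomposable $p_{2}$, the induced $d_{*}$ is multiplication by $\pm 2$, finishing the proof.
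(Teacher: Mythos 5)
Your part (1) is essentially the paper's own argument: both evaluate $(f_{\tensor})_{*}$ in degree $4m+4$ on classifying spaces, kill the $\PSp(m)$--contribution because $\pi_{4m+3}\Sp(m)$ is torsion mapping into the torsion--free group $\pi_{4m+3}\Sp(mn)\iso\ZZ$, and observe that the remaining image $m\,\esta\bigl(d_{*}(\pi_{4m+3}\SO(n))\bigr)$ is a proper subgroup; your appeal to Proposition \ref{P:doubling} to land in $2m\ZZ$ is in fact slightly more careful than the paper, which only records $m\ZZ$. Your verification via the Stiefel fibration that $\pi_{4m+3}\Sp(m)$ is a quotient of $\ZZ/2$ is correct.

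For part (2) you take a genuinely different route. The paper changes degree: it evaluates at $(i,n)\in\{(8,3),(12,5),(16,7)\}$, where $\pi_{i-1}\SO(n)$ is finite while $\pi_{i-1}\Sp(m)\iso\pi_{i-1}\Sp(mn)\iso\ZZ$, so the image of $(f_{\tensor})_{i}$ is $n\ZZ\subsetneq\ZZ$. You instead stay in degree $4m+4$, where the $\SO(n)$--factor is no longer stable, and split into $m\geq2$ (image in $m\ZZ$) and the two residual cases $(1,3)$ and $(1,5)$. The $m\geq2$ and $(1,3)$ cases are fine. The trade-off is that the paper's choice makes every case uniform, while yours forces an unstable computation for $\SO(5)$.

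That last computation is where there is a genuine gap. The claim that the generator of $\pi_{8}\B\SO(5)\iso\ZZ$ pairs to $\pm1$ with $p_{2}$ is false: already the stable generator of $\pi_{8}\B\SO(N)$, $N$ large, satisfies $\langle p_{2},-\rangle=\pm3!=\pm6$ by Bott's computation, and for $\B\SO(5)$ this is further multiplied by the index of $\pi_{7}\SO(5)$ in the stable group. Symmetrically, $q_{2}$ evaluates to $\pm6$, not $\pm1$, on the generator of $\pi_{8}\B\Sp(5)$ --- a normalization your argument implicitly needs in order to convert the class computation into the statement ``$d_{*}$ is multiplication by $\pm2$.'' The two errors happen to cancel in the ratio, but as written the step is not a proof. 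The clean fix avoids characteristic classes entirely: $d$ commutes with stabilization up to conjugation by a permutation matrix, hence up to pointed homotopy, so $d_{*}\colon\pi_{7}\SO(5)\to\pi_{7}\Sp(5)$ factors as $\pi_{7}\SO(5)\to\pi_{7}\SO(N)\to\pi_{7}\Sp(N)\iso\pi_{7}\Sp(5)$ for $N$ large; the middle map has image $2\pi_{7}\Sp(N)$ by Proposition \ref{P:doubling}, so the image of $d_{*}$ lies in $2\ZZ\subsetneq\ZZ$, which is all you need.
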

\begin{proof}
Suppose $f_{\tensor}$ has a section, namely $\sigma$. By Proposition \ref{tensorSpO}, the homomorphism induced on homotopy groups by $f_{\tensor}$ is given by $(x,y)\mapsto n\esta(x)+m\esta(d_{*}(y))$ for all $x\in \pi_{i}\B\PSp(m)$, $y\in\pi_{i}\B\SO(n)$, and $i \in \NN$.

Since $\sigma$ is a section of $f_{\tensor}$, the image of $(f_{\tensor})_{i}\circ \sigma_{i}$ is equal to $\pi_{i}\B\PSp(mn)$ for all $i\geq 1$.

\begin{enumerate}
\item From Table \ref{tab:lowhgSp} and \cite[Table 3]{TAAwOI2022}, $\pi_{4m+4}\B\SO(n)\iso\pi_{4m+4}\B\PSp(mn)\iso \ZZ$. From \cite{MiToHoG1963}, $\pi_{4m+4}\B\PSp(m)\iso\ZZ/2$. Therefore, the image of 
\[
(f_{\tensor})_{4m+4}:\pi_{4m+4}\B\PSp(m)\times\pi_{4m+4}\B\SO(n)\to\pi_{4m+4}\B\PSp(mn)
\]
is $m\ZZ$ which implies that the image of $(f_{\tensor})_{4m+4} \,\circ\, \sigma_{4m+4}$ is contained in $m\ZZ$. However, this image is equal to the integers.

\item Let $C$ denote the set $\bigl\{(8,3), (12,5), (16,7)\bigr\}$. From Table \ref{tab:lowhgSp}, $\pi_{i}\B\PSp(m)\iso\pi_{i}\B\PSp(mn)\iso \ZZ$  for all $(i,m) \in C$, and from \cite[Table 6.VII, Appendix A]{edm93} $\pi_{i}\B\SO(n)$ is torsion for all $(i,m) \in C$. Therefore, the image of $(f_{\tensor})_{i}:\pi_{i}\B\PO(m)\times\pi_{i}\B\SO(n)\to\pi_{i}\B\PO(mn)$ is $n\ZZ$ for all $(i,m) \in C$. In a similar way as in part (1), this leads to a contradiction.
\end{enumerate} 
\end{proof}

Let $m$ and $n$ be positive integers such that $n$ is odd. We present two examples of topological Azumaya algebras of degree $2mn$ with symplectic involutions that not decompose as the tensor product of topological Azumaya algebras of degrees $2m$ and $n$ with symplectic and orthogonal involutions, respectively.

\begin{example}\label{TAAwSIdontdecomp1}
Let $4m+4<n$ and $4m+4<4mn$. Let $\cS \in \pi_{4m+4}\B\PSp(mn)$ be a generator. Observe that  $\cS$ is a topological Azumaya algebra of degree $2mn$ with a symplectic involution on the unit $(4m+4)-$sphere, $S^{4m+4}$. An argument similar to the one in Proposition \ref{nosecsymp} part (1) can be used to prove that the tensor product decomposition mentioned above is not possible.
\end{example}

\begin{example}\label{TAAwSIdontdecomp2}
If $n\in \{3,5,7\}$, and $16<4m+3<4mn$. We can argue similarly for a topological Azumaya algebra of degree $2mn$ with a symplectic involution on the unit $i-$sphere that generates $\pi_{i}\B\PSp(mn)$ for $(i,m) \in C$.
\end{example}

\section{Decomposition of Symplectic Vector Bundles in the Stable Range} \label{sec:symplecticvb}
In this section, we show that symplectic vector bundles decompose as a tensor product of symplectic and orthogonal vector bundles in the stable range for $\SO(n)$.

\begin{proof}[Proof of Theorem \ref{vbSp}]
The tensor product operation $\tensor:\Sp(m) \times \SO(n) \to \Sp(mn)$ induces a map at the level of classifying spaces $f'_{\tensor}:\B\Sp(m) \times \B\SO(n) \to \B\Sp(mn)$. Let $F$ denote the homotopy fiber of $f'_{\tensor}$. From Proposition \ref{P:tpSpO}, we observe that $\pi_{i}F\iso \pi_{i}\B\SO(n)$ for all $i<4m+3$ and $i<n$. 

Therefore, the Moore-Postnikov tower of $f'_{\tensor}$ takes the form shown in diagram \eqref{MPofBSp} where $1<i<n-1$ and $i\equiv 3$ (mod $8$).
\begin{equation}\label{MPofBSp}
\begin{tikzcd}[row sep=small]
\B\Sp(m)\times\B\SO(n) \arrow[d] &\\
Y[n-1] \arrow[d] \arrow[r,"k_{n-1}"] & \K\Bigl(\pi_{n}\B\SO(n),n+1\Bigr)\\
Y[n-2] \arrow[d] \arrow[r,"k_{n-1}"] & \K\Bigl(\pi_{n-1}\B\SO(n),n\Bigr)\\
\vdots \arrow[d] & \\
Y[i+6] \arrow[d]  \arrow[r,"k_{i+6}"] & \K(\ZZ/2,i+8)\\
Y[i+5] \arrow[d]  \arrow[r,"k_{i+5}"] & \K(\ZZ/2,i+7)\\
Y[i+4] \arrow[d]  \arrow[r,"k_{i+4}"] & \K(\ZZ,i+6) \\
Y[i] \arrow[d] \arrow[r,"k_{i}"] & \K(\ZZ,i+2) \\
\vdots \arrow[d] & \\
\B\Sp(mn) \arrow[r,"k_{1}"] & \K(\ZZ/2,3)
\end{tikzcd}
\end{equation}

The quaternionic Grassmannian $G_{m}(\HH^{\infty})$ is a model for the classifying space of $\Sp(mn)$ \cite[pg 31-34]{HatchVB}. Moreover, from \cite[Section 3.2]{Hatch2002} and the universal coefficients theorem for cohomology $\Hy^{*}(G_{m}(\HH^{\infty});A)\iso A[\alpha_{1},\dots,\alpha_{m}]$ with $|\alpha_{i}|=4i$ for all $i=1,\dots,m$ and $A = \ZZ, \ZZ/2$ .

Let $\sk_{i}\B\Sp(mn)$ be the $i-$skeleton of $\B\Sp(mn)$. We have that $\Hy^{i}(\sk_{n}\B\Sp(mn)) \iso \Hy^{i}(\B\Sp(mn))$ for $i<n$, and $\Hy^{i}(\sk_{n}\B\Sp(mn)) \iso 0$ for $i>n$.

Since $\Hy^{i}(G_{m}(\HH^{\infty});A)\iso 0$ for $i\not\equiv 0$ (mod $4$), we have that $\Hy^{i}(\sk_{n}\B\Sp(mn);A)\iso 0$ for $i<n$,  $i\not\equiv 0$ (mod $4$) as well for $i\geq n+1$. Since $n$ is odd, it follows that $\Hy^{i}(\sk_{n}\B\Sp(mn);A)\iso 0$ for $i\not\equiv 0$ (mod $4$). Since $i+2, i+6, i+7$, and $i+8$ are not congruent to zero modulo $4$ for $i\equiv 3$ (mod $8$), there is a lifting, $\xi$, of the inclusion $\sk_{n}\B\Sp(mn) \hookrightarrow \B\Sp(mn)$ along the Moore-Postnikov tower of $f'_{\tensor}$.

Let $\cV: X\to \B\Sp(mn)$ be a symplectic bundle of rank $2mn$ and $\dim(X)\leq n$. By CW approximation, $\cV$ factors through the inclusion $\sk_{n}\B\Sp(mn) \hookrightarrow \B\Sp(mn)$. Thus, there exists a not necessarily unique lifting of $\cV$ along $f'_{\tensor}$ that makes diagram \eqref{liftingV} commute up to homotopy, namely the composite of the dotted arrow and $\xi$. The result follows.

\begin{equation}\label{liftingV}
\begin{tikzcd}[row sep=normal]
& & \B\Sp(m)\times\B\SO(n) \arrow[dd]\\
& \sk_{n}\B\Sp(mn)  \arrow[dr,hookrightarrow] \arrow[ur,"\xi",bend left=15] & \\
X \arrow[rr,"\cV"] \arrow[ru,dotted] &  & \B\Sp(mn)
\end{tikzcd}
\end{equation}
\end{proof}

\newpage
\bibliographystyle{alpha}
\bibliography{MyTAABiblio}
\end{document}